\documentclass[11pt,reqno]{amsart}
\usepackage{a4wide}
\usepackage{amsmath,amssymb,amsthm,mathtools}
\usepackage[mathcal]{eucal}
\usepackage{mathrsfs}
\usepackage{enumitem}
\usepackage[colorlinks=true,linkcolor=blue,citecolor=blue]{hyperref}

\newtheorem{theorem}{Theorem}[section]
\newtheorem{lemma}[theorem]{Lemma}
\newtheorem{proposition}[theorem]{Proposition}
\newtheorem{corollary}[theorem]{Corollary}
\newtheorem{question}[theorem]{Question}
\theoremstyle{definition}

\newtheorem{convention}[theorem]{Convention}
\theoremstyle{remark}
\newtheorem{remark}[theorem]{Remark}

\newcommand{\ZF}{\mathsf{ZF}}
\newcommand{\ZFC}{\mathsf{ZFC}}
\newcommand{\Sym}{\operatorname{Sym}}
\newcommand{\End}{\operatorname{End}}

\newcommand{\At}{\operatorname{At}}

\newcommand{\Ext}{\operatorname{Ext}}
\newcommand{\Ball}{\operatorname{Ball}}

\newcommand{\Rel}{\operatorname{Rel}}
\newcommand{\cP}{\mathcal{P}}
\newcommand{\cK}{\mathcal{K}}
\newcommand{\cC}{\mathcal{C}}
\newcommand{\cB}{\mathcal{B}}

\newcommand{\cT}{\mathcal{T}}
\newcommand{\bbT}{\mathbb{T}}
\newcommand{\bbF}{\mathbb{F}}
\newcommand{\bbK}{\mathbb{K}}

\newcommand{\bbQ}{\mathbb{Q}}
\newcommand{\bbR}{\mathbb{R}}
\newcommand{\bbC}{\mathbb{C}}
\newcommand{\bbZ}{\mathbb{Z}}

\title[Canonical reconstruction and forcing absoluteness]%
{Canonical reconstruction and forcing absoluteness \\ of standard structures}

\author[T. Kania]{Tomasz Kania}
\address[T. Kania]{Mathematical Institute,
Czech Academy of Sciences,
\v{Z}itn\'a 25,
115 67 Praha 1,
Czech Republic
\and
Institute of Mathematics and Computer Science,
Jagiellonian University,
\L{}ojasiewicza 6,
30-348 Krak\'ow,
Poland}
\email{kania@math.cas.cz, tomasz.marcin.kania@gmail.com}

\thanks{IM CAS (RVO 67985840).}

\date{\today}

\subjclass[2020]{Primary 03E40; Secondary 03E25, 03E47, 16S50, 20B30, 46B04, 46L05}

\keywords{Forcing, downward absoluteness, symmetric group, endomorphism ring, $\ell_1$-space, $\cB(H)$, Axiom of Choice, $\Pi^1_1$-definability}

\begin{document}

\begin{abstract}
We isolate a simple preservation principle governing when it is absolute,
between transitive models of set theory, that a given algebraic or
topological-algebraic structure has a \emph{standard form} $F(X)$ indexed
by a set~$X$. The principle is: if the index~$X$ (or a proxy for it) can
be recovered from~$F(X)$ by a uniform definable construction, then the
class of structures isomorphic to some~$F(X)$ is downward absolute from
forcing extensions. Answering a question raised by Noah Schweber, we
deduce in particular that no group that fails to be a full symmetric
group in the ground model can become one after forcing; the result holds
already in~$\ZF$. The same mechanism applies to full transformation
monoids, powerset Boolean algebras, full relation algebras, full clones,
full partition lattices, products $R^X$ of finitely generated centrally
indecomposable rings, the commutative $C^*$-algebras $\ell_\infty(X)$ and
$c_0(X)$, full endomorphism rings, the operator algebras $\cB(H)$ and
$\cK(H)$, and $\ell_1(X)$ as a real Banach lattice. In the motivating
symmetric-group case, the same reconstruction gives more than descent:
it yields a uniform $\Pi^1_1$ definition of fullness over transitive
$\ZF$-models. We then exhibit clean torsor obstructions, in the
standard symmetric-model situation: \emph{finite covers}
$Y \times n$ already separate $\ZF$-failure from $\ZFC$-descent without
any completeness caveat, and the finite-support normed space $c_{00}(I)$
provides the analogous Banach example. Bare-Banach-space isomorphism
with $\ell_1(\Gamma)$ exhibits a genuine $\ZFC$-descent. We conclude
with the corresponding, relative, obstructions to
$\Pi^1_1$-definability of standardness over transitive $\ZF$-models.
\end{abstract}

\maketitle

\section{Introduction}

A recurring question, which surfaces in many guises across algebra and
analysis, is whether being a ``standard'' object of a given type is
itself a definable property of the abstract structure. One instance was
raised by Noah Schweber: if a group $G$ fails to be isomorphic to any
full symmetric group $\Sym(X)$, can forcing make it one? More generally,
for a class $\cC = \{F(X) : X \text{ a set}\}$ of \emph{standard
forms} of some structure type, is membership in $\cC$ preserved as we
move between transitive models of $\ZF$ or $\ZFC$?

On its face the predicate ``$A \cong F(X)$ for some $X$'' is
$\Sigma^1_2$: it asserts the existence of an index~$X$ and of an
isomorphism $A \to F(X)$. More explicitly, in the examples considered
here the elements of $F(X)$ are typically subsets of~$X$, functions
on~$X$, relations on~$X$, operators on a space indexed by~$X$, or
families indexed by~$X$. Thus the assertion that a proposed map
$A \to F(X)$ is onto contains a universal quantifier over such objects,
not merely over the elements of~$X$. With $X$ as the first second-order
witness, this is the additional projective quantifier responsible for
the $\Sigma^1_2$ complexity. The familiar ``potential counterexample''
heuristic for ruling out a $\Pi^1_1$ definition of such a class is to
find $A$ outside $\cC$ in a ground model and inside $\cC$ in a forcing
extension. The goal of this note is to record a uniform obstruction to
this heuristic in many natural cases, and to delineate the cases where
the heuristic really does succeed.

For full symmetric groups the situation is stronger still. The
canonical reconstruction of the point set from the abstract group
yields a direct $\Pi^1_1$ definition of fullness. Thus, in the
motivating example, the forcing strategy cannot work not merely because
fullness is downward absolute, but because fullness is already
$\Pi^1_1$ over transitive $\ZF$-models.

The key observation is elementary but unifying. Call a standard form
$F(X)$ \emph{canonically reconstructible} if the index set~$X$ can be
recovered, up to a definable bijection, from the abstract structure of
$F(X)$ by a formula of the appropriate signature. Whenever this holds,
isomorphism with some $F(X)$ is downward absolute between any two
transitive models of $\ZF$, one of which contains the other:

\begin{quote}
\emph{If the index of a standard structure $F(X)$ can be reconstructed
canonically from its abstract structure, then the predicate
``$A \cong F(X)$ for some $X$'' is downward absolute from forcing
extensions, and indeed from any outer transitive $\ZF$-model.}
\end{quote}

The first main result illustrates the programme.

\begin{theorem}[Full symmetric groups; $\ZF$]\label{thm:intro-sym}
Let $M \subseteq N$ be transitive models of $\ZF$ and let $G \in M$ be
a group. If $N \models$ ``$G \cong \Sym(X)$ for some set $X$'', then
$M \models$ ``$G \cong \Sym(Y)$ for some set $Y$''.
\end{theorem}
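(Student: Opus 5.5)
The plan is to recover the point set $X$ inside $G$ from the group structure alone, using transpositions, by a construction that both $M$ and $N$ compute identically. In $\Sym(X)$ with $|X|\neq 6$, the transpositions are exactly the involutions $t$ such that, for every conjugate $t'$ of $t$, the product $tt'$ has order $1$, $2$ or $3$. This characterization is first-order in $G$, since it quantifies only over elements of $G$. A degenerate case has to be handled separately: if $|X|\le 2$, then $G$ is finite (or trivial), and $G\cong\Sym(n)$ is absolute because an isomorphism between finite structures can be found in $M$. For $|X|=6$ we also just use finiteness. So I will assume $G$ is infinite in $N$, and therefore in $M$, and fix in $M$ the definable set $T\subseteq G$ of elements satisfying this transposition formula. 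Since the formula quantifies only over elements of $G$, and $G$ is the same set in $M$ and $N$, we get $T^M=T^N$. Hence $T$ is the set of images of the transpositions under any isomorphism $\varphi\colon G\to\Sym(X)$ lying in $N$.

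Next I recover the points. Two transpositions $(ab)$ and $(cd)$ share a point exactly when they fail to commute or are equal. A point $x$ corresponds to the maximal "star" $S_x=\{(xy): y\neq x\}$. In $T$, a star is a maximal set of pairwise noncommuting elements that is not contained in a triangle $\{(ab),(bc),(ac)\}$. Once $|X|\ge4$, a maximal clique of size at least $4$ is a star. I will define in $M$ the set $Y$ of maximal subsets $S\subseteq T$ such that any two distinct elements of $S$ fail to commute and $|S|\ge 4$. The main obstacle is absoluteness here, because $Y$ is defined via subsets of $T$ and $M$ has fewer subsets. To handle it, note that each star is first-order definable from any one of its pairs: for non-commuting $s\neq s'\in T$, the star containing both is $\{s,s'\}\cup\{u\in T: u$ fails to commute with $s$ and $s'$, and $u\ne ss's\}$. (Here $ss's$ is the third transposition of the triangle.) So in $M$ I set $Y=\{S_{s,s'}\}$, defined by this explicit formula, which is in $M$ by Separation and Replacement. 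In $N$ these sets are exactly the stars, and $x\mapsto S_x$ is a bijection $X\to Y$ in $N$.

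Finally, $G$ acts on $Y$ by conjugation, $g\cdot S=gSg^{-1}$, and this gives a homomorphism $\rho\colon G\to\Sym(Y)$ defined in $M$. The image $\rho(G)$ computed in $N$ transports, via $X\cong Y$, to the natural action of $\Sym(X)$ on $X$. Hence $\rho$ is injective, and it is onto $\Sym(Y)^N$. Injectivity is absolute. For surjectivity in $M$: any $\pi\in\Sym(Y)^M$ lies in $\Sym(Y)^N$, so $\pi=\rho(g)$ for some $g\in G$, and $g\in M$ already. Therefore $\rho$ is an isomorphism $G\cong\Sym(Y)$ in $M$. No choice is used at any point, so the argument goes through in $\ZF$.
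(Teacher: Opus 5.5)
Your proposal is correct and is essentially the paper's proof. It uses the same first-order test isolating transpositions, the same star sets $P(s,t)$ defined from a non-commuting pair (which is exactly how the paper avoids the absoluteness problem you raised for maximal cliques), and the same conjugation action followed by the descent argument for surjectivity onto $\Sym^M(Y)$. The one step you assert rather than prove is the transposition characterisation for infinite $X$; the paper proves it in Lemma~\ref{lem:transpositions} by conjugating with a $5$-cycle when the involution has a fixed point and with an $8$-cycle when it is fixed-point-free. Also, the exceptional finite cases are $|X|\in\{4,6\}$, not just $|X|=6$, because double transpositions in $S_4$ pass your test; this is harmless, since you dispose of every finite $X$ directly.
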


In particular, no forcing extension of any $\ZFC$ model can turn a
non-full group into a full symmetric group. The same scheme of
reconstruction applies to a variety of structures. We shall prove
unconditional $\ZF$-descent for:
\begin{itemize}[nosep,label=$\bullet$]
  \item full symmetric groups and full transformation monoids;
  \item powerset Boolean algebras and Boolean rings of all subsets;
  \item full relation algebras $\Rel(X) = \cP(X \times X)$;
  \item full clones $\mathscr{O}_X$ of all finitary operations on~$X$;
  \item full partition lattices $\Pi(X)$;
  \item products $R^X$ where $R$ is finitely generated as a unital ring
    and has no central idempotents other than $0$ and $1$ — covering
    $\bbZ$, all $\bbZ/p^m\bbZ$, all finite fields $\bbF_{p^n}$, all
    matrix rings $M_k(\bbF_q)$, and all polynomial rings
    $\bbZ[t_1, \ldots, t_n]$;
  \item the commutative $C^*$-algebras $\ell_\infty(X)$ and $c_0(X)$;
  \item full endomorphism rings $\End_D(V)$ of non-zero vector spaces
    possessing a rank-one complemented line;
  \item the operator algebras $\cB(H)$ and $\cK(H)$ on a non-zero Hilbert
    space;
  \item $\ell_1(X)$ as a real Banach lattice.
\end{itemize}

For each of the above, the index can be reconstructed canonically by an
absolute formula. We shall also prove $\ZFC$-descent for several
choice-dependent examples. For finite covers and $c_{00}(\Gamma)$ we
give explicit $\ZF$-obstructions; for Hilbert spaces the obstruction is
the familiar basis-existence issue:
\begin{itemize}[nosep,label=$\bullet$]
  \item finite covers — the class of equivalence relations isomorphic
    to $Y \times n$ for some set $Y$ and a fixed integer $n \geqslant 2$;
  \item bare normed-space isomorphism with $c_{00}(\Gamma)$;
  \item bare Banach-space isomorphism with $\ell_1(\Gamma)$;
  \item Hilbert-space isomorphism with $\ell_2(\Gamma)$ — a
    basis-existence issue.
\end{itemize}

The $\ZF$-obstructions are always of the same form: a bundle of locally
isomorphic factors without a global trivialisation. Forcing, through
the generic production of a section, can turn the bundle into a trivial
standard object.

The following table summarises the catalogue; it should be read as a
guide to the theorems and propositions proved in the body of the paper.

\medskip\noindent
\begin{tabular}{@{}p{0.48\textwidth}ll@{}}
\textbf{Standardness class} & \textbf{Descent} & \textbf{Reference}\\[2pt]
\hline\\[-8pt]
$G \cong \Sym(X)$ & $\ZF$; in fact $\Pi^1_1$ &
  Thm~\ref{thm:symmetric-groups}, Cor~\ref{cor:sym-pi11}\\
$S \cong X^X$ & $\ZF$ & Thm~\ref{thm:transformation-monoid}\\
$B \cong \cP(X)$ & $\ZF$ & Thm~\ref{thm:powersets}\\
$A \cong \Rel(X)$ & $\ZF$ & Thm~\ref{thm:relation-algebras}\\
$C \cong \mathscr{O}_X$ (full clones) & $\ZF$ & Thm~\ref{thm:full-clones}\\
$L \cong \Pi(X)$ (partition lattices) & $\ZF$ & Thm~\ref{thm:partition-lattices}\\
$A \cong R^X$, $R$ f.g.\ centrally indecomp.\ & $\ZF$ & Thm~\ref{thm:fg-products}\\
$A \cong \ell_\infty(X),\ c_0(X)$ & $\ZF$ & Thm~\ref{thm:cstar-commutative}\\
$A \cong \End_D(V)$, $D$ varying & $\ZF$ & Thm~\ref{thm:endomorphism-rings}\\
$A \cong \cB(H),\ \cK(H)$ & $\ZF$ & Thms~\ref{thm:BH},~\ref{thm:KH}\\
$E \cong \ell_1(X)$ as real Banach lattice & $\ZF$ & Thm~\ref{thm:l1-lattice}\\[3pt]
\hline\\[-8pt]
$(E, \sim) \cong Y \times n$ (finite covers) & $\ZFC$ (strict) &
  Props~\ref{prop:finite-covers-zfc},~\ref{prop:finite-covers-zf-fail}\\
$E \cong c_{00}(I)$ as real normed space & $\ZFC$ (strict) &
  Thm~\ref{thm:c00-zfc},\ Prop~\ref{prop:c00-sign-torsor}\\
$E \cong \ell_1(\Gamma)$ as Banach space & $\ZFC$ &
  Thm~\ref{thm:l1-zfc},\ Prop~\ref{prop:l1-zf-failure}\\
$H \cong \ell_2(\Gamma)$ & $\ZFC$; $\ZF$ basis issue &
  Prop~\ref{prop:hilbert-zfc}\\
\end{tabular}
\medskip

The paper is organised as follows. Section~\ref{sec:descent} isolates
the descent lemma and its reformulation in terms of
$\Pi^1_1$-absoluteness. Section~\ref{sec:zf} treats the $\ZF$-descent
examples. Section~\ref{sec:zfc} treats the $\ZFC$-descent examples and
presents the torsor-style counterexamples. Section~\ref{sec:pi11}
records the $\Pi^1_1$-consequences.

\subsection*{Acknowledgements}
The author thanks Noah Schweber for raising the question that
motivated Theorem~\ref{thm:intro-sym}, pastebee for pointing out the
resulting $\Pi^1_1$ definition of fullness, and Emil Je\v{r}\'abek for
comments on the atomic-permutation-group viewpoint. Support received
from NCN Sonata-Bis~13 (2023/50/E/ST1/00067) is gratefully
acknowledged.

\section{Framework: the descent lemma}\label{sec:descent}

We work throughout with transitive models of set theory. The generic
situation, unless stated otherwise, is a pair $M \subseteq N$ of
transitive models of $\ZF$; the main case of interest is that of a
forcing extension $N = M[G]$, but no special property of forcing is used
beyond transitivity and agreement of the membership relation. The
framework accommodates $\ZF$ ground models, symmetric submodels, inner
models and outer forcing extensions uniformly.

Given a set $Y \in M$, we write $F^M(Y)$ and $F^N(Y)$ for the standard
structure built from~$Y$ as computed in~$M$ and in~$N$ respectively. In
general one only has $F^M(Y) \subseteq F^N(Y)$, because $N$ may contain
new subsets, sequences, or operators over the old set~$Y$.

\begin{convention}[Scalars]\label{conv:scalars}
For statements involving Banach spaces, $C^*$-algebras, and other
structures whose definition makes reference to a scalar field, the
scalar field is regarded as a fixed named sort. Equivalently, we
restrict to pairs $M \subseteq N$ in which the real (and, if applicable,
complex) fields of~$M$ and~$N$ coincide. This rules out only the
orthogonal phenomenon that a ground-model real Banach space is not
automatically a vector space over the reals of a forcing extension. All
constructions below respect this convention.
\end{convention}

\begin{convention}[Analytic terminology in $\ZF$]\label{conv:analysis}
For normed and metric structures in $\ZF$ we use the choice-free
completeness convention of Blackadar--Farah--Karagila. We use the term
\emph{$\sigma$-complete} in their generic sense for the equivalent
choice-free completeness properties listed in
\cite[Theorem~1.0.2]{BlackadarFarahKaragila}. For concreteness, we
shall often use the directed-family formulation: every directed family
of non-empty closed bounded subsets, ordered by reverse inclusion and
with diameters tending to~$0$, has non-empty intersection. In the
presence of the countable axiom of choice this agrees with ordinary
Cauchy-completeness.
In this paper a Banach space means a $\sigma$-complete normed vector
space, a Hilbert space means a $\sigma$-complete inner product space,
and a $C^*$-algebra means a $\sigma$-complete normed $*$-algebra
satisfying the $C^*$-identity; cf.\ \cite[Definition~2.0.1]{BlackadarFarahKaragila}.

For a Hilbert space~$H$, a compact operator means an operator sending
bounded subsets of~$H$ to totally bounded subsets. This is the definition
advocated in \cite[Definition~7.1.2]{BlackadarFarahKaragila}; by
\cite[Proposition~7.1.4]{BlackadarFarahKaragila} it is also the
norm-closure of the finite-rank operators, a characterisation used below
for old-part absoluteness.
\end{convention}

\begin{convention}[The zero $C^*$-algebra]
We allow the zero $C^*$-algebra as a unital $C^*$-algebra, with $0 = 1$,
when this avoids irrelevant empty-space exceptions.
\end{convention}

\begin{lemma}[Descent of $\sigma$-completeness]\label{lem:sigma-complete-descends}
Let $M \subseteq N$ be transitive models of $\ZF$ satisfying the scalar
convention, and let $E \in M$ be a normed space. If $N$ sees $E$ as
$\sigma$-complete, then $M$ sees $E$ as $\sigma$-complete.
\end{lemma}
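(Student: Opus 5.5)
We argue by contraposition, using the directed-family formulation of $\sigma$-completeness fixed in Convention~\ref{conv:analysis}. Suppose $M$ does not see $E$ as $\sigma$-complete. Then $M$ contains a family $\mathcal{D} = \{C_i : i \in I\}$ of non-empty closed bounded subsets of $E$ such that:
\begin{itemize}[nosep,label=$\bullet$]
  \item $\mathcal{D}$ is directed under reverse inclusion;
  \item $\inf_i \operatorname{diam}(C_i) = 0$;
  \item $\bigcap_i C_i = \emptyset$.
\end{itemize}
We show that the same family $\mathcal{D}$, viewed in $N$, witnesses failure of $\sigma$-completeness in $N$.

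The key point is that each relevant property is absolute between $M$ and $N$, because $E$ and its norm are the same set in both models and the reals agree by Convention~\ref{conv:scalars}.

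First, ``$C_i$ is non-empty'', ``$C_i \supseteq C_j$'' and directedness are $\Delta_0$ in the parameters $\mathcal{D}$ and $E$, so they hold in $N$.

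Second, boundedness and the diameters are computed from $\{\|x-y\| : x,y \in C_i\}$. This set of reals is the same in both models, so its supremum (a real of $M$, hence of $N$) is the same. Consequently $\inf_i \operatorname{diam}(C_i) = 0$ persists.

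Third, closedness of $C_i$ in $E$ says: for every $x \in E \setminus C_i$ there is a rational $r > 0$ with $B(x,r) \cap C_i = \emptyset$. This quantifies only over elements of $E$ and rationals, so it is absolute.

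Finally, $\bigcap_i C_i$ is computed by a $\Delta_0$ formula from $\mathcal{D}$. It is therefore literally the same set in $M$ and $N$, namely empty.

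Hence $N$ sees a directed family of non-empty closed bounded sets with diameters tending to $0$ and empty intersection, so $N$ does not see $E$ as $\sigma$-complete. This is the contrapositive of the lemma.

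The only step needing care is confirming that the chosen formulation really refers only to subsets lying in $M$, with quantifiers bounded by $E$, $\mathcal{D}$ and the common reals. We do not need $N$'s version of the notion to quantify over the same families: failure in $M$ is witnessed by a single family, which survives upward. One should also check that ``$\sigma$-complete'' in $N$ is taken with respect to $E$ itself rather than a completion, but this is how the lemma is stated.

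If a different equivalent formulation from \cite[Theorem~1.0.2]{BlackadarFarahKaragila} were the official definition, we would first pass to the directed-family formulation inside each model. That equivalence is a $\ZF$ theorem, so it holds in both $M$ and $N$, and the argument above applies unchanged.
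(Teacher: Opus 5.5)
Your proposal is correct and is essentially the paper's argument in contrapositive form. The paper argues directly: it takes an $M$-family, notes that it keeps its properties in $N$, gets from $N$ a point $x \in E$ lying in every $C_i$, and uses transitivity ($x \in E \in M$) to conclude that $M$ sees a non-empty intersection. You use the same absoluteness facts but phrase the last step as ``$\bigcap_i C_i$ is defined by a bounded formula and is the same set in both models.''
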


\begin{proof}
Let $(C_i)_{i \in I} \in M$ be a directed family of non-empty closed
bounded subsets of~$E$, ordered by reverse inclusion, whose diameters
tend to~$0$, all as computed in~$M$. These are absolute assertions about
the old normed space~$E$ and the old scalar field, so $N$ sees the same
family with the same properties. Since $N$ sees $E$ as $\sigma$-complete,
there is a point $x \in E$ lying in every~$C_i$. But $E \in M$ and $M$ is
transitive, hence $x \in M$. Therefore $M$ sees that the intersection of
the family is non-empty. This is exactly $\sigma$-completeness in~$M$.
\end{proof}

The elementary observation on which everything rests is the following.

\begin{lemma}[Descent lemma]\label{lem:descent}
Let $M \subseteq N$ be transitive models of $\ZF$, and let
$A, Y, \theta \in M$. Suppose that the standard construction $F$ is
absolute on old elements, in the sense that
\[
  F^M(Y) = F^N(Y) \cap M.
\]
If
\[
  N \models \text{``$\theta:A\to F^N(Y)$ is an isomorphism''},
\]
then
\[
  M \models \text{``$\theta:A\to F^M(Y)$ is an isomorphism''}.
\]
\end{lemma}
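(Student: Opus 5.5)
We split ``$\theta$ is an isomorphism'' into three parts: $\theta$ maps $A$ into $F(Y)$, $\theta$ is an injective homomorphism (preserving the operations and relations and reflecting relations), and $\theta$ is surjective. We then check each part in $M$ separately. Implicitly, the operations and relations of $F^M(Y)$ should be the restrictions of those of $F^N(Y)$ to $F^M(Y)$; we read this as part of ``absolute on old elements'', as it is in every example below.

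\emph{Step 1 (range).} Since $\theta\in M$ and $M$ is transitive, each value $\theta(a)$ with $a\in A$ lies in $M$. In $N$ it also lies in $F^N(Y)$. Hence $\theta(a)\in F^N(Y)\cap M=F^M(Y)$, so $M$ sees $\theta$ as a map $A\to F^M(Y)$.

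\emph{Step 2 (homomorphism and injectivity).} Injectivity is a $\Delta_0$ statement about $\theta$ and $A$, so it transfers from $N$ to $M$. Take an operation symbol $f$ and elements $a_1,\dots,a_k\in A$. In $N$ we have $\theta(f^A(\bar a))=f^{F^N(Y)}(\theta\bar a)$. All arguments on the right lie in $F^M(Y)$, and $f^{F^M(Y)}$ is the restriction of $f^{F^N(Y)}$, so the two values agree. Relations, including their reflection by $\theta$, are handled the same way.

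\emph{Step 3 (surjectivity).} This is the only step where the universal quantifier over $F(Y)$ matters. Its direction, however, is favourable. Let $z\in F^M(Y)$. Then $z\in F^N(Y)$, so surjectivity in $N$ gives some $a\in A$ with $\theta(a)=z$. Since $A\in M$, this $a$ already lies in $M$. Thus $M$ sees $\theta$ as onto $F^M(Y)$.

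Combining the three steps, $M$ sees $\theta$ as a bijective homomorphism whose inverse preserves the relations, that is, as an isomorphism $A\to F^M(Y)$. The main obstacle is Step 1: this is where the hypothesis $F^M(Y)=F^N(Y)\cap M$ is used in full, because $F^N(Y)$ may contain new elements. These are harmless only because $\theta$ is an old function, so its values are old. Step 2 depends on the convention, noted at the start, that the structure on $F^M(Y)$ is inherited from $F^N(Y)$.
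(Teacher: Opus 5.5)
Your proposal is correct and follows the paper's proof essentially step for step. The range step uses transitivity together with $F^M(Y)=F^N(Y)\cap M$, the homomorphism step uses absoluteness of injectivity and of the preserved operations, and surjectivity pulls the $N$-preimage of an old $z$ back into $M$ via $A\in M$. Your explicit assumption that the operations and relations on $F^M(Y)$ are the restrictions of those on $F^N(Y)$ is exactly what the paper leaves implicit when it says these identities are ``absolute once all objects involved belong to~$M$''.
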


\begin{proof}
Since $\theta, A \in M$ and $M$ is transitive, every value of $\theta$
is an element of~$M$. Thus, if $N$ sees that $\theta(a) \in F^N(Y)$,
then
\[
  \theta(a) \in F^N(Y) \cap M = F^M(Y).
\]
So $M$ sees that $\theta$ maps $A$ into $F^M(Y)$.

The algebraic or metric-algebraic identities asserting that $\theta$
preserves the named operations are absolute once all objects involved
belong to~$M$. Injectivity is also absolute, since it is a bounded
statement about the ground-model set~$A$.

It remains only to check surjectivity onto the old target. Let
$z \in F^M(Y)$. Then $z \in F^N(Y)$, so in $N$ there is some $a \in A$
with $\theta(a) = z$. Since $A \in M$ and $M$ is transitive, this same
$a$ belongs to~$M$. Hence $M$ sees that $z$ lies in the range
of~$\theta$.
\end{proof}

\begin{remark}[Old-part absoluteness]\label{rem:old-part}
In all applications below, the equality $F^M(Y) = F^N(Y) \cap M$ is
straightforward. For example:
\begin{itemize}[nosep]
  \item $\Sym^M(Y) = \Sym^N(Y) \cap M$;
  \item $(Y^Y)^M = (Y^Y)^N \cap M$;
  \item $\cP^M(Y) = \cP^N(Y) \cap M$;
  \item $\Rel^M(Y) = \Rel^N(Y) \cap M$;
  \item for a fixed ground-model ring $R$,
    $(R^Y)^M = (R^Y)^N \cap M$;
  \item if $E, U \in M$, then
    $\End_E^M(U) = \End_E^N(U) \cap M$;
  \item $\mathscr{O}_Y^M = \mathscr{O}_Y^N \cap M$ for the many-sorted
    full clone on~$Y$;
  \item $\Pi^M(Y) = \Pi^N(Y) \cap M$ for the full partition lattice
    on~$Y$;
  \item under the scalar convention, old bounded, finite-support,
    $c_0$-, or $\ell_1$-families are recognised correctly in~$M$ and~$N$.
\end{itemize}
For $\cB(K)$ and $\cK(K)$, the analytic conventions above are in force.
An old function $T : K \to K$ which $N$ sees as linear and bounded is
already seen as linear and bounded in~$M$. For $\cK(K)$ on a Hilbert
space, compactness is understood as in Convention~\ref{conv:analysis};
we then use the Blackadar--Farah--Karagila characterisation
\[
  \cK(K) = \overline{\mathcal{F}(K)}^{\|\cdot\|},
\]
where $\mathcal{F}(K)$ denotes the finite-rank operators. If an old
operator $T : K \to K$ belongs to $\cK^N(K)$, then for every rational
$\varepsilon > 0$ the model $N$ contains a finite-rank operator~$S$
with
\[
  \|T - S\| < \varepsilon.
\]
By finite-dimensionality and the choice-free Riesz representation theorem
for $\sigma$-complete Hilbert spaces
\cite[Theorem~2.0.6 and Corollary~7.0.2]{BlackadarFarahKaragila}, such
an~$S$ can be written as a finite sum of rank-one operators
\[
  x \longmapsto \langle x, u_j \rangle v_j
\]
with $u_j, v_j \in K$ and old scalars; hence $S$ is coded by finite
ground-model data and belongs to~$M$. Therefore $M$ also sees~$T$ as a
norm-limit of finite-rank operators, and
\[
  \cK^M(K) = \cK^N(K) \cap M.
\]
\end{remark}

Translating to first-order-style definability, we recall that a property
$\forall Z\,\varphi(Z, A)$ with $\varphi$ absolute is downward absolute
from outer transitive models.

\begin{lemma}[$\Pi^1_1$-test]\label{lem:pi11-test}
Let $\cC$ be a class of structures definable, uniformly over transitive
models of $\ZF$, by a $\Pi^1_1$ formula of the form
$\forall Z\,\varphi(Z, A)$ whose matrix $\varphi$ is absolute for
transitive submodels. Then $\cC$ is downward absolute: if
$M \subseteq N$ are transitive models, $A \in M$, and $N \models A \in
\cC$, then $M \models A \in \cC$.
\end{lemma}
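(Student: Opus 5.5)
The argument is a direct unwinding of the definition; no reconstruction is needed. Fix transitive models $M \subseteq N$ of $\ZF$ and a structure $A \in M$ with $N \models A \in \cC$. By the uniformity hypothesis, $N \models \forall Z\,\varphi(Z, A)$, where $Z$ ranges over the second-order objects of the relevant kind over $A$: subsets of $A$, of $A\times A$, or of some fixed set built absolutely from $A$. To verify $M \models A \in \cC$, we must show $M \models \varphi(Z, A)$ for every $Z \in M$ of that kind.

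So let $Z \in M$ be such an object, as computed in $M$. First, $Z$ is still an object of the same kind in $N$. Being a subset of $A$, or of a finite power or absolutely defined set built from $A$, is a $\Delta_0$ property of $Z$ and $A$, hence absolute between transitive models. In effect $\cP^M(A) = \cP^N(A) \cap M$, exactly as in Remark~\ref{rem:old-part}. Therefore the universal statement in $N$ applies to $Z$, and $N \models \varphi(Z, A)$. Since $\varphi$ is absolute for transitive submodels and $Z, A \in M$, we conclude $M \models \varphi(Z, A)$. As $Z$ was arbitrary, $M \models \forall Z\,\varphi(Z,A)$. By the uniformity of the definition of $\cC$ across transitive $\ZF$-models, this means $M \models A \in \cC$.

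The only step needing care is the first one: the range of the quantifier $\forall Z$ must be described in a way that makes $M$'s version a subset of $N$'s version. I would make this explicit by requiring that the quantifier range over subsets of a set $S(A)$, where $S$ is given by an absolute operation; $A$ itself, $A\times A$, $A^{<\omega}$ and the like all qualify. With that requirement in place, the inclusion is immediate, and everything else is transitivity together with the absoluteness assumption on $\varphi$.

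Note that the lemma requires only downward transfer of a universal statement. We never need $N$'s new objects $Z$ to be reflected in $M$, which is why no form of upward absoluteness is needed.
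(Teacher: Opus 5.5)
Your proof is correct and is the same one-line absoluteness argument the paper gives: instantiate $N$'s universal statement at each ground-model $Z$, then pull $\varphi(Z,A)$ down to $M$ by absoluteness of the matrix. Your extra requirement that $\forall Z$ range over $\cP(S(A))$ for an absolute operation $S$, so that $\cP^M(S(A)) = \cP^N(S(A)) \cap M$, makes explicit a point the paper leaves tacit in ``for every $Z \in M$''.
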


\begin{proof}
This is the standard one-line absoluteness argument. If $N \models \forall Z\,
\varphi(Z, A)$, then in particular $\varphi(Z, A)$ holds in~$N$ for
every $Z \in M$. By absoluteness of the matrix, $M \models \varphi(Z,
A)$ for every such~$Z$, so $M \models \forall Z\,\varphi(Z, A)$.
\end{proof}

Consequently, if one can exhibit $M \subseteq M[G]$ and $A \in M$ with
$A \notin \cC^M$ but $A \in \cC^{M[G]}$, then $\cC$ does not admit a
$\Pi^1_1$ definition uniform over transitive $\ZFC$-models. The
theorems of Sections~\ref{sec:zf} and~\ref{sec:zfc} show that this
strategy fails for a large collection of standardness classes, and
exhibit several natural cases where it succeeds.

\section{\texorpdfstring{Unconditional $\ZF$-descent}{Unconditional ZF-descent}}\label{sec:zf}

We gather here the standardness classes that admit canonical
reconstruction already in $\ZF$, and hence are downward absolute under
any outer transitive-model extension.

\subsection{Full symmetric groups}\label{subsec:sym}

In all that follows, $\Sym(X)$ denotes the group of all bijections
$X \to X$.

\begin{lemma}[Definability of transpositions]\label{lem:transpositions}
There is a first-order group formula $\tau(x)$ such that, for every
infinite set~$X$, the set of elements of $\Sym(X)$ satisfying $\tau$ is
exactly the set of transpositions.
\end{lemma}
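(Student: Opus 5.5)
The plan is to characterise transpositions among the non-trivial involutions by the orders of their products with conjugates. Concretely, I would take
\[
  \tau(x)\;:\equiv\; x^2 = 1 \;\wedge\; x \neq 1 \;\wedge\; \forall g\,\bigl( (x\,gxg^{-1})^6 = 1 \bigr).
\]
This is a first-order group formula. The proof then splits into a soundness direction (every transposition satisfies $\tau$) and a completeness direction (nothing else does). Every construction below moves only finitely many named points, so no choice is used and the argument runs in $\ZF$.

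\emph{Soundness.} Every transposition is a non-trivial involution. Each conjugate $gtg^{-1}$ of a transposition $t=(a\,b)$ is again a transposition $(g(a)\,g(b))$. Now compute the product of two transpositions $t$ and $s$. It is $1$ if $t=s$, has order $3$ if they share exactly one point, and has order $2$ if they are disjoint. In every case $(ts)^6=1$, so $t$ satisfies $\tau$.

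\emph{Completeness.} Let $t\in\Sym(X)$ satisfy $\tau$. Then $t$ is a non-trivial involution, hence a product of at least one disjoint $2$-cycle, possibly with infinitely many cycles. Suppose $t$ has at least two $2$-cycles. I will exhibit a permutation $s$ that agrees with $t$ outside finitely many named points and has the same cycle type as $t$ on those points. Such an $s$ is conjugate to $t$ by a permutation of finite support, and $ts$ will have order $4$ or $5$, contradicting $\tau$. There are two cases.
\begin{itemize}[nosep]
  \item \emph{At least four $2$-cycles.} Pick four of them, $(a\,b)(c\,d)(e\,f)(g\,h)$. Replace them in $s$ by $(b\,c)(d\,e)(f\,g)(h\,a)$. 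On these eight points the $t$- and $s$-edges form one alternating $8$-cycle, so $ts$ acts there as a product of two $4$-cycles. Elsewhere $ts$ is the identity. Hence $ts$ has order $4$.
  \item \emph{Exactly two or three $2$-cycles.} Then $t$ has finite support, and since $X$ is infinite there is a fixed point $e$ of $t$. Take two of the cycles, $(a\,b)(c\,d)$. Let $s$ be $(b\,c)(d\,e)$ on $\{a,b,c,d,e\}$, fixing $a$, and let $s$ agree with $t$ on any third cycle and on all remaining points. Then $s$ has the same number of $2$-cycles and the same set of fixed points up to a finite swap, so it is conjugate to $t$. The alternating path $a-b-c-d-e$ makes $ts$ a $5$-cycle on these five points and the identity elsewhere. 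Hence $ts$ has order $5$.
\end{itemize}
In either case $(ts)^6\neq 1$, a contradiction. So $t$ is a single transposition.

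The main obstacle is the completeness direction. For finite $X$ this style of characterisation genuinely fails: in $S_6$ an outer automorphism exchanges transpositions with triple transpositions. The proof therefore has to use infinitude somewhere, and it does so exactly in the second case, by supplying a spare fixed point $e$. I would carefully verify the order computations of $ts$ from the alternating-path picture of a product of two involutions. I would also check that the conjugating permutation can be written down explicitly from the finitely many chosen points.
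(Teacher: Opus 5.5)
Your proposal is correct and follows the paper's proof essentially verbatim. It uses the same formula, the same soundness check, and the same two conjugates: $(b\,c)(d\,e)(f\,g)(h\,a)$, which gives two $4$-cycles, and $(b\,c)(d\,e)$ with a spare fixed point, which gives a $5$-cycle. The only cosmetic difference is the case split: you split by the number of $2$-cycles (at least four versus two or three), while the paper splits by whether the involution has a fixed point; both use the infinitude of $X$ at exactly one step.
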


\begin{proof}
Let
\[
  \tau(x) := x \ne 1 \ \wedge \ x^2 = 1 \ \wedge \ \forall y \,
  \bigl( x \cdot (y x y^{-1}) \bigr)^{6} = 1.
\]
We show that, in $\Sym(X)$ with $X$ infinite, $\tau$ defines the
transpositions. Note that $y x y^{-1}$ ranges over the full conjugacy
class of~$x$ as~$y$ ranges over $\Sym(X)$, so the clause says that $x$
multiplied by any of its conjugates has order dividing~$6$.

\emph{If $x$ is a transposition}, then every conjugate $z = yxy^{-1}$
is also a transposition, and the product of two transpositions has
order $1$ (when they are equal), $2$ (when they are disjoint), or $3$
(when they share a point). Hence $(xz)^6 = 1$.

\emph{Conversely}, suppose $x \in \Sym(X)$ is an involution moving at
least four points and not a transposition. We shall produce a conjugate
$z = yxy^{-1}$ with $(xz)^6 \ne 1$.

Write $x = \prod_{i \in I} (a_i\, b_i)$ as a product of disjoint
$2$-cycles, with $|I| \geqslant 2$, the $a_i, b_i \in X$ pairwise
distinct across~$i$.

\emph{Case A.} $x$ has a fixed point $e \in X$. Since $|I| \geqslant 2$,
pick indices $1, 2 \in I$, and consider the $5$ distinct points $a_1,
b_1, a_2, b_2, e$. Define $\sigma \in \Sym(X)$ to act on these five
points as the $5$-cycle $(a_1\,b_1\,a_2\,b_2\,e)$ and as the identity
on every $2$-cycle $\{a_i, b_i\}$ of~$x$ with $i \geqslant 3$ and
everywhere else. Set $z = \sigma x \sigma^{-1}$. A direct computation
shows that $z$ acts on $\{a_1, b_1, a_2, b_2, e\}$ as
$(b_1\,a_2)(b_2\,e)$ and agrees with $x$ elsewhere. Composing,
\[
  xz \text{ acts on } \{a_1, b_1, a_2, b_2, e\} \text{ as }
  (a_1\,b_1\,b_2\,e\,a_2),
\]
a $5$-cycle, and as the identity off this set. Hence $xz$ has order~$5$
and $(xz)^6 = xz \ne 1$.

\emph{Case B.} $x$ has no fixed points. Since $X$ is infinite and $x$
has no fixed point, $x$ has infinitely many $2$-cycles; in particular,
it has at least four of them. Pick indices $1, 2, 3, 4 \in I$ and
consider the $8$ distinct points $a_1, b_1, \ldots, a_4, b_4$. Define
$\sigma \in \Sym(X)$ to act on these eight points as the $8$-cycle
$(a_1\, b_1\, a_2\, b_2\, a_3\, b_3\, a_4\, b_4)$, and as the identity
on every other $2$-cycle of~$x$. Set $z = \sigma x \sigma^{-1}$. Then
$z$ restricted to $\{a_1, b_1, \ldots, a_4, b_4\}$ is the involution
$(b_1\, a_2)(b_2\, a_3)(b_3\, a_4)(b_4\, a_1)$, and agrees with~$x$
elsewhere. A direct computation shows
\[
  xz \text{ acts on } \{a_1, b_1, \ldots, a_4, b_4\} \text{ as }
  (a_1\, a_4\, a_3\, a_2)(b_1\, b_2\, b_3\, b_4),
\]
\emph{i.e.}\ as a product of two disjoint $4$-cycles, and as the identity
elsewhere. Hence $xz$ has order~$4$, and $(xz)^6 = (xz)^2 =
(a_1\,a_3)(a_2\,a_4)(b_1\,b_3)(b_2\,b_4) \ne 1$.

In both cases $\tau(x)$ fails, completing the proof.
\end{proof}

\begin{remark}
For finite $X$, the same formula need not isolate the transpositions:
in $S_4$ it also detects double transpositions, and in $S_6$ it detects
the fixed-point-free involutions. This is irrelevant for
Theorem~\ref{thm:symmetric-groups}, since the finite case is handled
separately. For $|X| = 0, 1, 2, 3, 5$ the formula does isolate the
transpositions. The broader first-order analysis of infinite symmetric
groups, including first-order control of their natural permutation-group
structure, goes back in various formulations to
Shelah~\cite{Shelah-permgroups} and McKenzie~\cite{McKenzie}; see also
\cite{Dixon-Mortimer,Scott} and the references therein.
\end{remark}

\begin{theorem}[Canonical reconstruction for $\Sym(X)$]
\label{thm:symmetric-groups}
Let $M \subseteq N$ be transitive models of $\ZF$ and let $G \in M$ be
a group. If
\[
  N \models \text{``$G \cong \Sym(X)$ for some set $X$''},
\]
then
\[
  M \models \text{``$G \cong \Sym(Y)$ for some set $Y$''}.
\]
\end{theorem}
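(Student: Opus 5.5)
Split into the finite and infinite case for $G$, as computed in $N$. If $N$ sees $G \cong \Sym(X)$ with $X$ finite, then $G$ is finite (finiteness of $G \in M$ is absolute), say $|G| = n!$ with $n = |X|$. The isomorphism type of a finite group is coded by its finite multiplication table. So $N$ contains an isomorphism $G \to S_n = \Sym(\{0,\dots,n-1\})$, and $S_n$ is literally the same object in $M$ and $N$. This isomorphism is a finite subset of $G \times S_n$, hence lies in $M$. Alternatively, Lemma~\ref{lem:descent} applies directly once $\theta$ is known to be in $M$, which it is by finiteness. So $M \models G \cong \Sym(n)$.

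In the infinite case I would reconstruct the point set inside $M$ from $G$ alone. Let $T = \{g \in G : G \models \tau(g)\}$, with $\tau$ from Lemma~\ref{lem:transpositions}. The formula $\tau$ quantifies only over elements of $G$, so it is absolute between $M$ and $N$, and hence $T \in M$. In $N$, via the given isomorphism, $T$ corresponds exactly to the transpositions of $\Sym(X)$.

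Points of $X$ are recovered as maximal families of pairwise non-commuting transpositions that form a ``star''. Two distinct transpositions share a point iff they do not commute. A point $a$ corresponds to the set $S_a = \{(a\,b) : b \ne a\}$. To characterise these sets abstractly, define on $T$ a ternary condition: for distinct pairwise non-commuting $s,t,u \in T$, they form a star (common point) rather than a triangle $\{(a\,b),(b\,c),(a\,c)\}$ iff $u \ne sts$. Then declare $s \approx t$ for a fixed reference, or more cleanly define $Y$ as the set of maximal subsets $P \subseteq T$ such that any two members fail to commute and no three members form a triangle. With $|X| \ge 4$, these maximal sets are exactly the $S_a$. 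All these conditions are first-order over $G$ and bounded by $\cP(T)$, which is where absoluteness needs care. Instead of maximal subsets, I would define each star explicitly as
\[
P_{s,t} = \{s,t\} \cup \{u \in T : u \text{ fails to commute with } s,t \text{ and } u \ne sts\}
\]
for non-commuting $s,t$. This set is first-order definable from parameters, so $Y = \{P_{s,t}\} \in M$, computed identically in $N$. Let $G$ act on $Y$ by conjugation, giving a homomorphism $\rho: G \to \Sym^M(Y)$, with $\rho \in M$.

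Finally I would verify that $\rho$ is an isomorphism. In $N$, the bijection $X \to Y$, $a \mapsto S_a$, intertwines the action, so $N$ sees $\rho$ as an isomorphism $G \to \Sym^N(Y)$. Lemma~\ref{lem:descent} together with $\Sym^M(Y) = \Sym^N(Y) \cap M$ (Remark~\ref{rem:old-part}) then gives that $M$ sees $\rho: G \to \Sym^M(Y)$ as an isomorphism. The main obstacle is the combinatorial check that $P_{s,t}$ really is the full star at the common point of $s = (a\,b)$ and $t = (a\,c)$. Here $sts = (b\,c)$, and a transposition $u$ not commuting with both $s$ and $t$ and different from $(b\,c)$ must have the form $(a\,d)$, so $P_{s,t} = S_a$. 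That the conjugation action is faithful and transitive-compatible is then routine.
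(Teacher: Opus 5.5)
Your proposal is correct and follows the paper's proof essentially step for step: you treat the finite case separately, define $T$ via $\tau$, build the stars $P_{s,t}$ with the exclusion $u \ne sts$, let $G$ act on the set of stars by conjugation, and finish with the descent lemma using $\Sym^M(Y) = \Sym^N(Y) \cap M$. The detour through maximal non-commuting families is unnecessary, and in the finite case you explicitly replace $X$ by $\{0,\dots,n-1\}$ so that the target lies in $M$, a detail the paper leaves implicit.
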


\begin{proof}
If $N$ sees $G \cong \Sym(X)$ with $X$ finite, then $G$ is finite and
the finite isomorphism is already an element of~$M$, which therefore
witnesses the conclusion. We henceforth assume $X$ infinite in~$N$.

\emph{Reconstruction of transpositions.}
By Lemma~\ref{lem:transpositions} there is a formula $\tau(x)$
defining transpositions in $\Sym(X)$. In~$M$, let
\[
  T = \{g \in G : G \models \tau(g)\}.
\]
Since the definition of~$T$ is a first-order property of~$G$, the set
$T$ is an element of~$M$. In~$N$, under any isomorphism $G \cong
\Sym(X)$, $T$ is identified with the full set of transpositions of~$X$.

\emph{Reconstruction of the underlying set.}
For non-commuting $s, t \in T$, define
\[
  P(s, t) = \{s, t\} \cup \bigl\{ r \in T :
  rs \ne sr, \ rt \ne tr, \ r \ne sts \bigr\}.
\]
If, in $\Sym(X)$, $s = (a\, b)$ and $t = (a\, c)$ with $a, b, c$
pairwise distinct, then $sts = (b\, c)$, and a transposition~$r$ fails
to commute with both~$s$ and~$t$ precisely when $r = (a\, d)$ for some
$d \in X \setminus \{a\}$, or $r = (b\, c)$. Thus
\[
  P(s, t) = \bigl\{ (a\, d) : d \in X, \ d \ne a \bigr\} =: E_a,
\]
the \emph{star of~$a$}.

Let
\[
  Y = \bigl\{ P(s, t) : s, t \in T, \ st \ne ts \bigr\} \in M.
\]
Two stars $E_a$ and $E_{a'}$ with $a \ne a'$ are distinct, and the map
$a \mapsto E_a$ is a bijection $X \to Y$ in~$N$. Thus $Y$ is a canonical
avatar of the original set~$X$, reconstructed from~$G$ alone.

\emph{Reconstruction of the action.}
Define, in~$M$,
\[
  \theta : G \to \Sym^M(Y), \qquad
  \theta(g)(E) = \{ g r g^{-1} : r \in E \} \quad (E \in Y).
\]
Conjugation by~$g$ preserves transpositions and non-commutation, and
hence permutes the stars. For each $g \in G$, $\theta(g)$ is thus a
permutation of~$Y$ belonging to~$M$. In~$N$, after identification with
$\Sym(X)$, the map $\theta$ is the natural action of $\Sym(X)$ on its
set of stars, which is isomorphic to the tautological action on~$X$;
in particular, $\theta$ is a group isomorphism $G \to \Sym^N(Y)$.

By Lemma~\ref{lem:descent} applied to $A = G$, using $\Sym^M(Y) =
\Sym^N(Y) \cap M$, the same $\theta$ is an isomorphism $G \to
\Sym^M(Y)$ in~$M$. Hence $M$ sees $G \cong \Sym(Y)$.
\end{proof}

\begin{corollary}[A uniform $\Pi^1_1$ definition of fullness]
\label{cor:sym-pi11}
The class of groups isomorphic to $\Sym(X)$ for some set~$X$ admits a
uniform $\Pi^1_1$ definition over transitive models of $\ZF$.
\end{corollary}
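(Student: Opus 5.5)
The plan is to make the reconstruction in the proof of Theorem~\ref{thm:symmetric-groups} into a formula with one universal second-order quantifier, namely over functions on the reconstructed set $Y$. Given a group $G$, we compute from $G$ alone, by bounded first-order means, the set $T$ of elements satisfying $\tau$ (Lemma~\ref{lem:transpositions}). From $T$ we form the star set $Y=\{P(s,t): s,t\in T,\ st\ne ts\}$ and the conjugation map $\theta:G\to Y^Y$. All of these are $\Delta_0$-definable from $G$, so they are absolute between transitive models.

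The defining formula $\Phi(G)$ will be a disjunction of two clauses. The first clause says that $G$ is finite and isomorphic to $S_n$ for some $n\in\omega$; this is arithmetical/absolute, since the isomorphism is a finite object. The second clause is the conjunction of two parts:
\begin{itemize}[nosep]
\item[(i)] a first-order list of sanity conditions: $Y$ is nonempty, $\theta$ is an injective homomorphism into the permutations of $Y$, and $\theta(g)$ is a bijection for each $g$;
\item[(ii)] the $\Pi^1_1$ clause $\forall Z\,\bigl(Z$ a bijection $Y\to Y \Rightarrow \exists g\in G\ \theta(g)=Z\bigr)$.
\end{itemize}
The matrix of (ii) quantifies over $g$ only inside the set $G$, so it is $\Delta_0$ in parameters $Z,G$ and hence absolute for transitive models. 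This puts $\Phi$ in the form required by Lemma~\ref{lem:pi11-test}.

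Next comes correctness within a single model of $\ZF$.

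\emph{Infinite case, forward direction.} If $G\cong\Sym(X)$ with $X$ infinite, the proof of Theorem~\ref{thm:symmetric-groups} shows that $\theta$ is an isomorphism onto $\Sym(Y)$. Hence (i) and (ii) hold.

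\emph{Finite case, forward direction.} If $G\cong\Sym(X)$ with $X$ finite, the first clause holds.

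\emph{Converse.} If the second clause holds, then $\theta$ is an injective homomorphism which is surjective onto $\Sym(Y)$, so $G\cong\Sym(Y)$ directly. We do not need $\tau$ to have identified genuine transpositions: once $\theta$ is a bijective homomorphism onto all of $\Sym(Y)$, the conclusion follows regardless of what $T$ turned out to be. The same argument gives the converse for the first clause.

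The only delicate point, and the one I expect to need care, is ensuring that every ingredient other than the quantifier $\forall Z$ really is absolute. In particular, ``$Z$ is a bijection of $Y$'' and ``$\theta(g)=Z$'' are $\Delta_0$, and the finite clause can be phrased via $\omega$, which is absolute. With this checked, uniformity over transitive $\ZF$-models is immediate, and Lemma~\ref{lem:pi11-test} recovers Theorem~\ref{thm:symmetric-groups} as a consequence.
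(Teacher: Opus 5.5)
Your argument is correct for the notion of $\Pi^1_1$ definition in Lemma~\ref{lem:pi11-test}: one universal quantifier whose matrix is absolute for transitive models. Its core is the paper's. You reconstruct $T$, the stars $Y$ and the conjugation action $\theta$, then assert that $\theta$ is faithful and that every permutation of $Y$ is induced by some $g \in G$. The difference is in how you package this, and it is not purely cosmetic.

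\begin{itemize}
\item \textbf{Finite groups.} You dispose of all of them with the absolute clause $\exists n\in\omega\,(G\cong S_n)$. This is legitimate set-theoretically and spares you any analysis of $\tau$ on finite sets.
\item \textbf{The second-order quantifier.} You quantify over bijections $Z$ of $Y$, a set of subsets of $G$. The paper quantifies only over relations $R$ on the first-order definable set of $D$-pairs; these code permutations of the definable quotient $Y=D/{\equiv}$, and every other quantifier ranges over elements of $G$.
\end{itemize}

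The paper does this because it produces a genuine $\Pi^1_1$ sentence in the second-order language of groups. In that setting your finite clause is not admissible as written. It is an infinite disjunction, and by compactness (a nonprincipal ultraproduct of the $S_n$) no first-order sentence defines the finite symmetric groups. Finiteness of $G$ is also not first-order, nor evidently $\Pi^1_1$ in $\ZF$.

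The paper avoids this by observing that the proof of Lemma~\ref{lem:transpositions} works verbatim for finite $|X|\geq 8$: Case~A needs only five points and Case~B only four $2$-cycles. So $\Phi_\infty$ already captures $S_n$ for $n\geq 8$, and only the finitely many first-order sentences $G\cong S_n$ with $n<8$ need to be added.

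In short, your route is simpler and fully suffices for downward absoluteness. The paper's route gives a definition that is a formula of second-order logic over $G$ itself, independent of set-theoretic coding. To upgrade your proof to that stronger form, you would need exactly that extra check of $\tau$ for large finite $X$, together with the coding of bijections of $Y$ by relations on $D$-pairs.
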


\begin{proof}
For each $n < 8$, the assertion $G \cong S_n$ is first-order, since it
is the assertion that $G$ is a finite group with a specified
multiplication table. It remains to describe a single $\Pi^1_1$
condition capturing the cases $|X| \geqslant 8$ and the infinite cases.

Let $\tau(x)$ be the group formula from
Lemma~\ref{lem:transpositions}. The same proof shows that $\tau$
defines the transpositions in $\Sym(X)$ whenever $|X| \geqslant 8$, as
well as when $X$ is infinite. For $s, t \in G$, write $D(s, t)$ for the
first-order condition
\[
  \tau(s) \wedge \tau(t) \wedge st \ne ts.
\]
For pairs $(s, t)$ satisfying $D$, define the first-order relation
$\rho(r; s, t)$ by
\[
  \rho(r; s, t)
  \Longleftrightarrow
  \tau(r) \wedge
  \bigl( r = s \vee r = t \vee
    (rs \ne sr \wedge rt \ne tr \wedge r \ne sts) \bigr).
\]
Thus, in an actual symmetric group, $\rho(r; s, t)$ says that the
transposition~$r$ belongs to the star $P(s, t)$.

Define an equivalence relation on pairs satisfying~$D$ by
\[
  (s, t) \equiv (u, v)
  \Longleftrightarrow
  D(s, t) \wedge D(u, v) \wedge
  \forall r\, \bigl( \rho(r; s, t) \leftrightarrow \rho(r; u, v) \bigr).
\]
Let $Y$ be the quotient of the definable class of $D$-pairs by this
equivalence relation. This is not introduced as a new parameter in the
formula; it is only a convenient abbreviation for the definable
quotient.

Conjugation gives a definable action on the quotient via
\[
  g \cdot (s, t) = (g s g^{-1}, g t g^{-1}).
\]
Now let $R$ be a second-order variable coding a binary relation on the
set of $D$-pairs. There is a first-order condition, with parameter~$R$,
saying that $R$ codes a permutation of the quotient $Y = D/{\equiv}$:
namely, $R$ is $\equiv$-saturated, total and single-valued
on~$\equiv$-classes, and likewise for the inverse relation. Denote this
first-order condition by $\operatorname{Perm}(R)$. Likewise, for
$g \in G$, there is a first-order condition $\operatorname{Ind}(g, R)$
saying that $R$ is the permutation induced by~$g$:
\[
  \forall (s, t)\, \forall (u, v)\,
  \Bigl(
  R\bigl((s, t), (u, v)\bigr)
  \leftrightarrow
  (u, v) \equiv (g s g^{-1}, g t g^{-1})
  \Bigr),
\]
with the quantifiers restricted to $D$-pairs.

Consider the sentence $\Phi_\infty(G)$ asserting
\[
  \forall g\,
  \Bigl[
    \bigl(
      \forall (s, t)\,
      \bigl(
        D(s, t) \Rightarrow
        (g s g^{-1}, g t g^{-1}) \equiv (s, t)
      \bigr)
    \bigr)
    \Rightarrow g = 1
  \Bigr]
\]
and
\[
  \forall R\,
  \bigl(
    \operatorname{Perm}(R) \Rightarrow
    \exists g \in G\ \operatorname{Ind}(g, R)
  \bigr).
\]
The first displayed part says that the conjugation action on the
reconstructed quotient~$Y$ is faithful; the second says that every
permutation of~$Y$ is induced by an element of~$G$. This is a
$\Pi^1_1$ condition: the only second-order quantifier is the universal
quantifier over~$R$; all remaining quantifiers are first-order
quantifiers over elements of~$G$.

Now suppose $G \models \Phi_\infty$. In the ambient transitive model,
form the actual quotient set $Y = D/{\equiv}$. The conjugation action
gives a homomorphism $\theta : G \to \Sym(Y)$; the first part of
$\Phi_\infty$ says that~$\theta$ is injective, and the second part says
that~$\theta$ is surjective. Hence $G \cong \Sym(Y)$.

Conversely, if $G \cong \Sym(X)$ with $|X| \geqslant 8$ or with~$X$
infinite, then $\tau$ defines precisely the transpositions, the
quotient $D/{\equiv}$ is exactly the set of point-stars, and the
conjugation action is the natural full action of $\Sym(X)$ on~$X$.
Thus $G \models \Phi_\infty$.

Therefore fullness is defined by the single formula
\[
  \Bigl(\bigvee_{n < 8} G \cong S_n\Bigr) \vee \Phi_\infty(G).
\]
Since a finite first-order disjunction with a $\Pi^1_1$ formula is
again equivalent to a $\Pi^1_1$ formula, this gives the desired uniform
$\Pi^1_1$ definition.
\end{proof}

\begin{remark}[Atomic permutation groups]
The preceding proof separates the first-order and genuinely
second-order parts of fullness. The first-order part reconstructs a
definable quotient~$Y$ and says that~$G$ acts faithfully on~$Y$ by
conjugation. If one adds the first-order clauses saying that the
$\tau$-elements act as transpositions on~$Y$ and that every pair of
distinct points of~$Y$ is swapped by some $\tau$-element, one obtains
the usual ``atomic permutation group'' situation: $G$ is identified
with a subgroup of $\Sym(Y)$ containing all transpositions. Fullness is
then exactly the additional $\Pi^1_1$ assertion that every permutation
of~$Y$ is induced by an element of~$G$.
\end{remark}

\subsection{Full transformation monoids}\label{subsec:trans-monoid}

Let $X^X$ denote the full transformation monoid under composition.

\begin{theorem}\label{thm:transformation-monoid}
Let $M \subseteq N$ be transitive models of $\ZF$ and let $S \in M$ be
a monoid. If $N \models$ ``$S \cong X^X$ for some set $X$'', then
$M \models$ ``$S \cong Y^Y$ for some set $Y$''.
\end{theorem}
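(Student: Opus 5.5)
The plan is to follow the template of Theorem~\ref{thm:symmetric-groups}: reconstruct, inside $M$ and from the abstract monoid $S$ alone, a set $Y$ together with a homomorphism $\theta : S \to (Y^Y)^M$, verify in $N$ that $\theta$ is an isomorphism onto $(Y^Y)^N$, and then descend with Lemma~\ref{lem:descent}, using $(Y^Y)^M = (Y^Y)^N \cap M$ from Remark~\ref{rem:old-part}. If $N$ sees $X$ empty or finite, then $S$ is finite, the isomorphism is a finite object, and it already lies in $M$. So we may assume $X \neq \emptyset$; the case $|X| = 1$ (the trivial monoid) is also trivial.

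\emph{Reconstruction of points.} In $X^X$ the constant maps $c_a$ are first-order definable. They are exactly the elements $e$ with $e f = e$ for every $f$, where the product is composition with $f$ applied first. Indeed, if $e\circ f = e$ for all $f$, then taking $f$ to be constant maps shows that $e$ is constant. Equivalently, they are the left zeros of $S$. Let $Y = \{e \in S : \forall f\ (ef = e)\} \in M$. This set is defined by a first-order formula over $S$, so it is the same set whether computed in $M$ or in $N$. In $N$, the map $a \mapsto c_a$ is a bijection $X \to Y$.

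\emph{Reconstruction of the action.} Define $\theta(s)(e) = s e$ for $s \in S$ and $e \in Y$. This is well defined because $s c_a = c_{s(a)}$ is again constant. Since $\theta(st)(e) = ste = \theta(s)(\theta(t)(e))$, the map $\theta$ is a monoid homomorphism $S \to (Y^Y)^M$, and it lies in $M$. In $N$, transporting along $X \cong X^X$, $\theta$ becomes the tautological action of $X^X$ on $X$ under $a \mapsto c_a$. That action is injective, since two maps agreeing on every point are equal, and it is surjective onto $(Y^Y)^N$, since every function $Y \to Y$ corresponds to a function $X \to X$. Hence $N$ sees $\theta$ as an isomorphism $S \to (Y^Y)^N$, and Lemma~\ref{lem:descent} transfers this to $M$.

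The only step needing care is checking that the left-zero formula isolates exactly the constant maps, including the composition-order convention. With the opposite convention one uses right zeros instead, and the same argument goes through symmetrically. No choice is used anywhere, so the argument is valid over $\ZF$. I expect no genuine obstacle: the monoid case is easier than $\Sym(X)$ because the points appear directly as elements of $S$.
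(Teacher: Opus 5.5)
Your proposal is correct and is essentially the paper's proof: the paper also takes $C=\{s\in S:\forall f\in S,\ sf=s\}$, the constant maps (left zeros), defines $\theta(s)(c)=sc$, observes that in $N$ this is the standard identification of $X^X$ with the full transformation monoid on its constants, and concludes by Lemma~\ref{lem:descent}. Your separate treatment of finite $X$ is harmless but unnecessary, since the left-zero reconstruction already works for every non-empty $X$; if you keep it, take the finite isomorphism onto $n^n$ with $n=|X|$, because $X$ itself need not lie in $M$.
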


\begin{proof}
Assume $X \ne \emptyset$ (the empty case is trivial). In $X^X$, an
element $c$ is a constant function if and only if $c \circ f = c$ for
every $f \in X^X$: necessity is immediate; sufficiency follows by
taking~$f$ itself constant. Hence the set
\[
  C = \{ s \in S : \forall f \in S,\ sf = s \} \in M
\]
corresponds in~$N$ to the set of constants in~$X^X$, which is a copy
of~$X$.

Define, in~$M$,
\[
  \theta : S \to C^C, \qquad \theta(s)(c) = sc \quad (c \in C).
\]
Since constants are closed under left multiplication by any element,
$\theta$ is well-defined, and in~$N$ it coincides with the standard
identification of~$X^X$ with the full transformation monoid on its set
of constants. By Lemma~\ref{lem:descent}, $\theta$ is an isomorphism
$S \to C^C$ in~$M$.
\end{proof}

\subsection{Powerset Boolean algebras}\label{subsec:powersets}

\begin{theorem}\label{thm:powersets}
Let $M \subseteq N$ be transitive models of $\ZF$ and let $B \in M$ be
a Boolean algebra (equivalently, a Boolean ring). If $N \models$
``$B \cong \cP(X)$ for some set $X$'', then $M \models$ ``$B \cong
\cP(Y)$ for some set $Y$''.
\end{theorem}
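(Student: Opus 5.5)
We follow the pattern of Theorems~\ref{thm:symmetric-groups} and~\ref{thm:transformation-monoid}: reconstruct the index set from $B$ by a first-order definition, build a candidate isomorphism inside~$M$, and then apply Lemma~\ref{lem:descent}. The natural avatar of~$X$ is the set of atoms. In~$M$ put
\[
  Y = \{ a \in B : a \ne 0 \ \wedge\ \forall b \in B\,(b \leqslant a \Rightarrow b = 0 \vee b = a) \} \in M .
\]
This is a first-order property of the old structure~$B$, so it has the same meaning in $M$ and in~$N$. Under any isomorphism $B \cong \cP(X)$ in~$N$, the set $Y$ corresponds exactly to the singletons $\{x\}$. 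Hence $x \mapsto \{x\}$ transports to a bijection $X \to Y$ in~$N$. If $X=\emptyset$, then $B$ is the one-element algebra and the conclusion is trivial.

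Next we define, in~$M$,
\[
  \theta : B \to \cP^M(Y), \qquad \theta(b) = \{ a \in Y : a \leqslant b \}.
\]
This is a set in~$M$ by Separation. In~$N$, after identifying $B$ with $\cP(X)$ and $Y$ with the singletons, $\theta$ is the map $S \mapsto \{\{x\} : x \in S\}$. That map is the canonical isomorphism $\cP(X) \to \cP(Y)$ induced by the bijection $X \to Y$. So $N$ sees $\theta : B \to \cP^N(Y)$ as a Boolean-algebra isomorphism. We need only check that it respects $\wedge$, $\vee$ and complement, and that it is bijective. Each of these follows in $\cP(X)$ because every subset is the union of the singletons it contains, and every subset of singletons has a union.

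Finally, Remark~\ref{rem:old-part} gives $\cP^M(Y) = \cP^N(Y) \cap M$. By Lemma~\ref{lem:descent} with $A = B$ and $F = \cP$, the same $\theta$ is an isomorphism $B \to \cP^M(Y)$ in~$M$. Hence $M \models B \cong \cP(Y)$.

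The Boolean-ring formulation is handled identically. The ring operations are first-order definable from the lattice operations and conversely ($a+b = (a\wedge\neg b)\vee(\neg a\wedge b)$, $ab = a \wedge b$), and $\cP(Y)$ with symmetric difference and intersection is the corresponding ring. There is no serious obstacle here. The only point requiring care is that $\theta$ is defined in~$M$ from~$B$ alone, without using the isomorphism that exists only in~$N$. Surjectivity onto the \emph{old} powerset must not be argued directly in~$M$, which would need completeness and atomicity facts about~$B$ that are not yet known there. It is instead inherited through Lemma~\ref{lem:descent}.
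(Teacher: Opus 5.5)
Your proof is correct and is essentially the paper's own argument. Take $Y=\At(B)$ and define $\theta(b)=\{a\in Y : a\leqslant b\}$ inside $M$; $N$ sees $\theta$ as the canonical atomic representation, and Lemma~\ref{lem:descent} with $\cP^M(Y)=\cP^N(Y)\cap M$ finishes the proof (your closing caveat is slightly overstated, since atomicity is first-order in $B$ and completeness also descends directly because suprema of old subsets are old elements, but this does not affect the argument).
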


\begin{proof}
Let $Y = \At(B) \in M$ be the set of atoms of~$B$, defined by the
absolute formula ``$a \ne 0$ and, for every $b$, $0 \leqslant b
\leqslant a$ implies $b = 0$ or $b = a$''. For $b \in B$ put
$\theta(b) = \{ a \in Y : a \leqslant b \}$. In~$N$ this is the
canonical atomic representation of the powerset algebra~$\cP(X)$,
identifying $\At(B)$ with~$X$. By Lemma~\ref{lem:descent}, $\theta$ is
an isomorphism $B \to \cP^M(Y)$ in~$M$.
\end{proof}

\begin{corollary}\label{cor:complete-atomic}
The property ``$B$ is a complete atomic Boolean algebra'' is downward
absolute between transitive models of $\ZF$.
\end{corollary}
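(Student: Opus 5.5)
The plan is to reduce Corollary~\ref{cor:complete-atomic} to Theorem~\ref{thm:powersets}. The key input is the classical equivalence, valid in $\ZF$, that a Boolean algebra is complete and atomic if and only if it is isomorphic to $\cP(X)$ for some set $X$. With this equivalence available inside both $M$ and $N$, descent is immediate. Suppose $N \models$ ``$B$ is complete and atomic''. Then $N \models$ ``$B \cong \cP(X)$ for some $X$''. By Theorem~\ref{thm:powersets}, $M \models$ ``$B \cong \cP(Y)$ for some $Y$''. Applying the equivalence in $M$, $M \models$ ``$B$ is complete and atomic''.

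So the real work is verifying the equivalence in $\ZF$. The direction from $\cP(X)$ to complete atomic is trivial: arbitrary unions give all suprema, and the singletons are the atoms lying below every non-zero set. For the converse, let $B$ be complete and atomic, and consider the map $\theta(b) = \{a \in \At(B) : a \leqslant b\}$ from the proof of Theorem~\ref{thm:powersets}. No choice is needed at any step:
\begin{itemize}[nosep]
  \item \emph{Injectivity and order reflection.} If $b \not\leqslant c$, then $b \wedge c' \ne 0$, so by atomicity some atom lies below $b$ but not below $c$.
  \item \emph{Surjectivity.} For $S \subseteq \At(B)$, the supremum $\bigvee S$ exists by completeness. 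An atom $a \leqslant \bigvee S$ must belong to $S$: otherwise $a \wedge s = 0$ for all $s \in S$, which gives $a \leqslant (\bigvee S)'$ and hence $a = 0$.
  \item \emph{Homomorphism.} The map $\theta$ preserves meets and complements, since an atom lies below $b$ or below $b'$ but not both.
\end{itemize}

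Alternatively, one could argue directly, avoiding the equivalence on the $N$ side. Atomicity is downward absolute as a statement quantifying only over elements of $B$. Completeness in $N$ quantifies over more subsets of $B$, a superset of those in $M$, so it also descends: any $S \in M$ has a least upper bound in $N$, that element lies in $B$, and the least-upper-bound property refers only to elements of $B$. This route is even simpler, so I would present it as the main proof and mention the Theorem~\ref{thm:powersets} route as the intended illustration. I expect no real obstacle; the only care point is checking that the $\ZF$ equivalence uses no choice, which the bullet argument above confirms.
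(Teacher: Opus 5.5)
Your first route is exactly the paper's proof. The paper uses the same map $b \mapsto \{a \in \At(B) : a \leqslant b\}$, gets injectivity from atomicity and surjectivity from completeness, concludes $B \cong \cP(\At(B))$ in $N$, and then applies Theorem~\ref{thm:powersets}. Your bullets supply the homomorphism check that the paper leaves implicit.

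The direct route you would prefer is genuinely different, and it is also correct. Atomicity is a bounded statement about $B$, hence absolute. Completeness has the form $\forall S \subseteq B\, \exists b \in B\, \varphi(S, b, B)$, where $\varphi$ (``$b$ is the least upper bound of $S$'') is bounded. It therefore descends, because $\cP^M(B) \subseteq \cP^N(B)$ and any witness lies in $B \subseteq M$. This is just Lemma~\ref{lem:pi11-test}.

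This route needs neither the reconstruction nor the $\ZF$ equivalence, and it reverses the logical dependence. Combined with your choice-free equivalence between ``complete atomic'' and ``isomorphic to some $\cP(X)$'', it gives an independent proof of Theorem~\ref{thm:powersets}. It also gives a uniform $\Pi^1_1$ definition of powerset algebras, analogous to Corollary~\ref{cor:sym-pi11}. The paper's route is chosen instead to present the corollary as an instance of canonical reconstruction, which is the paper's expository point.

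One wording slip: in the easy direction you mean that every non-zero subset of $X$ contains a singleton, not that singletons lie below every non-zero set.
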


\begin{proof}
In $\ZF$, every complete atomic Boolean algebra is canonically
isomorphic to the powerset algebra of its set of atoms. Indeed, if
$Y = \At(B)$, the map
\[
  b \longmapsto \{ a \in Y : a \leqslant b \}
\]
is injective by atomicity and surjective by completeness, since every
set of atoms has a supremum.

Thus, if $N$ sees that $B$ is complete atomic, then $N$ sees
$B \cong \cP(Y)$ for its set of atoms. By
Theorem~\ref{thm:powersets}, $M$ sees $B \cong \cP(Z)$ for some
set~$Z$, and hence $M$ sees that $B$ is complete atomic.
\end{proof}

\subsection{Full relation algebras}\label{subsec:relation-algebras}

Let $\Rel(X) = \cP(X \times X)$ be the full relation algebra on $X$,
with Boolean operations, relational composition
$(R; S) = \{(x, z) : \exists y\, (x, y) \in R \wedge (y, z) \in S\}$,
converse $R^{\smile} = \{(y, x) : (x, y) \in R\}$, and identity relation
$1' = \{(x, x) : x \in X\}$.

\begin{theorem}\label{thm:relation-algebras}
Let $M \subseteq N$ be transitive models of $\ZF$, and let $A \in M$ be
a relation algebra. If
\[
  N \models \text{``$A \cong \Rel(X)$ for some set $X$''},
\]
then
\[
  M \models \text{``$A \cong \Rel(Y)$ for some set $Y$''}.
\]
\end{theorem}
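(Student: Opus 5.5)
We follow the template of Theorem~\ref{thm:powersets}: reconstruct the index set from the atoms of~$A$ by an absolute first-order definition, build in~$M$ a canonical map $\theta : A \to \Rel^M(Y)$, check in~$N$ that it is the standard isomorphism, and descend with Lemma~\ref{lem:descent} using $\Rel^M(Y) = \Rel^N(Y) \cap M$ from Remark~\ref{rem:old-part}. The empty case $X = \emptyset$ is trivial: $A$ is then the one-element algebra, and $A \cong \Rel(\emptyset)$ already in~$M$.

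\emph{Reconstruction of the points.} In~$M$, let $\At(A)$ be the set of atoms of the Boolean reduct of~$A$. This is the same absolute formula as before. In $\Rel(X)$ the atoms are exactly the singletons $\{(x,y)\}$. Among them, the \emph{diagonal atoms} $\{(x,x)\}$ are the atoms lying below the identity~$1'$. So put
\[
  Y = \{ a \in \At(A) : a \leqslant 1' \} \in M .
\]
In~$N$, the map $x \mapsto \{(x,x)\}$ is a bijection $X \to Y$. To recover pairs, note that in $\Rel(X)$ we have $\{(x,x)\} ; 1 ; \{(y,y)\} = \{(x,y)\}$, where $1 = X \times X$ is the top element. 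Hence every atom $c$ has the form $d;1;e$ for a unique pair $(d,e) \in Y \times Y$. Concretely, $d = (c ; c^{\smile}) \cdot 1'$ and $e = (c^{\smile} ; c) \cdot 1'$.

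\emph{The map.} Define in~$M$
\[
  \theta : A \to \cP(Y \times Y), \qquad
  \theta(b) = \{ (d,e) \in Y \times Y : d ; 1 ; e \leqslant b \}.
\]
This is a definition by a bounded formula over~$A$ and~$Y$, so $\theta \in M$. In~$N$, after transport along $A \cong \Rel(X)$ and the identification $X \cong Y$, the map $\theta$ becomes the identity on $\cP(X \times X)$. Indeed, $b$ is the union of the singletons below it, and $d;1;e$ is the singleton $\{(x,y)\}$. Thus $\theta$ preserves the Boolean operations, composition, converse and $1'$. The preservation of composition, for example, reduces to the calculus of singletons, $\{(x,y)\};\{(y,z)\} = \{(x,z)\}$. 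So $N$ sees $\theta$ as an isomorphism of relation algebras $A \to \Rel^N(Y)$.

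\emph{Descent.} The relational operations on $\Rel(Y)$ are defined by formulas that are absolute for old subsets of $Y \times Y$. Hence $\Rel^M(Y) = \Rel^N(Y) \cap M$ as relation algebras, and not merely as sets. Lemma~\ref{lem:descent} then shows that $\theta$ is an isomorphism $A \to \Rel^M(Y)$ in~$M$.

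The only step needing care is verifying in~$N$ that $\theta$ is onto and respects composition. This is not an issue about the models: it is the fact that $\Rel(X)$ is atomic with every element the join of the atoms below it. That fact holds in~$\ZF$ because it is literally about sets of pairs. All the work is inside~$N$, where $A$ is honestly $\Rel(X)$, so no choice is needed.
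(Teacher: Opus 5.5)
Your proposal is correct and follows the paper's proof: the same reconstruction of $Y$ as the Boolean atoms below $1'$, then a canonical coordinate map $\theta : A \to \Rel(Y)$ that is seen to be an isomorphism in~$N$, then descent via Lemma~\ref{lem:descent}. The only difference is cosmetic: the paper tests membership by $p ; r ; q \ne 0$ where you use $d ; 1 ; e \leqslant b$, and in $\Rel(X)$ both conditions say exactly that $(x,y) \in r$.
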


\begin{proof}
In $\Rel(X)$, the atoms of the underlying Boolean algebra are the
singletons $\{(x, y)\}$. Among them, the atoms below the identity
relation $1'$ are precisely the diagonal atoms $\{(x, x)\}$. Thus the
set
\[
  Y = \{ a \in A : a \text{ is a Boolean atom and }
  a \leqslant 1' \}
\]
is definable in the relation-algebra structure and is a canonical copy
of~$X$.

For $r \in A$, define
\[
  \theta(r) = \{ (p, q) \in Y \times Y : p ; r ; q \ne 0 \}.
\]
In~$N$, after identifying $A \cong \Rel(X)$ and $Y$ with $X$, this is
exactly the usual identification of a relation with the set of pairs of
points it relates. Hence $N$ sees
\[
  \theta : A \to \Rel^N(Y)
\]
as an isomorphism. Since $\Rel^M(Y) = \Rel^N(Y) \cap M$, the descent
lemma gives the result in~$M$.
\end{proof}

\subsection{Full clones}\label{subsec:full-clones}

Let $\mathscr{O}_X$ denote the many-sorted clone of all finitary
operations on a set~$X$, with sorts $X^{X^n}$ for $n < \omega$,
composition, and projections; cf.~\cite{Szendrei}. For $X = \emptyset$
we use the usual convention: the nullary sort is empty, and every
positive-arity sort is a singleton.

\begin{theorem}\label{thm:full-clones}
Let $M \subseteq N$ be transitive models of $\ZF$, and let $C \in M$ be
a many-sorted clone. If
\[
  N \models \text{``$C \cong \mathscr{O}_X$ for some set $X$''},
\]
then
\[
  M \models \text{``$C \cong \mathscr{O}_Y$ for some set $Y$''}.
\]
\end{theorem}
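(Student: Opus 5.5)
We follow the same canonical-reconstruction scheme as for $X^X$ in Theorem~\ref{thm:transformation-monoid}, now reading the index off the nullary sort of the clone. Since $C \cong \mathscr{O}_X$ in~$N$, the nullary sort $C_0$ corresponds to $X^{X^0} \cong X$, the set of constants. Put $Y = C_0 \in M$. This is a set in~$M$ requiring no definition beyond selecting a sort, so it is the most canonical avatar available. The degenerate case $X = \emptyset$ is handled by the stated convention: then $C_0$ is empty and every positive-arity sort is a singleton. This description is absolute, so $M$ sees $C \cong \mathscr{O}_\emptyset$ directly.

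\emph{Construction of $\theta$.} For $f \in C_n$ we evaluate by composing with constants. Set
\[
  \theta_n(f) = \bigl( (y_1,\dots,y_n) \mapsto f(y_1,\dots,y_n) \bigr) \in (Y^{Y^n})^M ,
\]
where $f(y_1,\dots,y_n) \in C_0$ is the clone composition of $f$ with the nullary elements $y_i$. The map $\theta = (\theta_n)_n$ is defined in~$M$ from the clone operations alone. In~$N$, take any isomorphism $C \cong \mathscr{O}_X$ and identify $C_0$ with~$X$. Then $\theta_n$ is exactly the identification of an $n$-ary operation with its function of points. Consequently $N$ sees $\theta$ as an isomorphism onto $\mathscr{O}^N_Y$, and in particular it preserves composition and projections. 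This step also uses that a clone isomorphism preserves the composition with nullary elements, which is routine.

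\emph{Descent.} By Remark~\ref{rem:old-part}, we have $\mathscr{O}^M_Y = \mathscr{O}^N_Y \cap M$ sortwise, because $(Y^{Y^n})^M = (Y^{Y^n})^N \cap M$ and $\omega$ is absolute. Lemma~\ref{lem:descent}, applied sort by sort, then shows that $\theta$ is an isomorphism $C \to \mathscr{O}^M_Y$ in~$M$. Injectivity, preservation of operations and surjectivity onto old operations are checked exactly as in that proof.

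The only mildly delicate point is the degenerate/empty convention. If one prefers a single-sorted presentation, a further question is whether the nullary sort is really part of the signature. Should it not be available, I would fall back on the unary sort $C_1 \cong X^X$ and apply the constant-detection formula of Theorem~\ref{thm:transformation-monoid} there, taking $Y = \{c \in C_1 : \forall f\ c\circ f = c\}$.
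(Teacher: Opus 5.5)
Your proof is correct and follows the paper's scheme: reconstruct a copy $Y$ of the index set inside $C$, let each operation act on $Y$ by substitution, note that $N$ sees this as the standard representation, and descend via $\mathscr{O}^M_Y = \mathscr{O}^N_Y \cap M$ and Lemma~\ref{lem:descent}. The one difference is the choice of $Y$: the paper uses the definable set of constant unary operations (your fallback) rather than the nullary sort. Your choice is slightly cleaner, since it also handles $X=\emptyset$ uniformly, whereas the paper's unary test must exclude that case because the unique unary operation on $\emptyset$ passes it vacuously.
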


\begin{proof}
If $N$ sees $X = \emptyset$, then $C$ is the degenerate many-sorted
clone with empty nullary sort and exactly one operation in each
positive arity. This is absolute sort by sort: each sort is either
empty or a singleton, and the unique sort-preserving isomorphism
belongs to~$M$. Thus $M$ already sees $C \cong \mathscr{O}_{\emptyset}$.
Assume henceforth that $X \ne \emptyset$.

The constant unary operations form a definable copy of the underlying
set: in $\mathscr{O}_X$, a unary operation $c$ is constant iff $c$
equals the composition $c \circ f$ for every unary operation~$f$. Let
$Y$ be this set. Every $n$-ary operation acts on $Y^n$ by substitution
into constants. This gives, in~$M$, a clone homomorphism
\[
  \theta : C \to \mathscr{O}_Y.
\]
In~$N$, this is the standard representation of the full clone on its
set of constants, hence an isomorphism onto $\mathscr{O}_Y^N$. Since
old finitary operations on~$Y$ are exactly the old elements of
$\mathscr{O}_Y^N$, Lemma~\ref{lem:descent} applies.
\end{proof}

\subsection{Full partition lattices}\label{subsec:partition-lattices}

Let $\Pi(X)$ denote the lattice of all equivalence relations on~$X$,
ordered by refinement, with join equal to the equivalence hull of the
union.

\begin{theorem}\label{thm:partition-lattices}
Let $M \subseteq N$ be transitive models of $\ZF$, and let $L \in M$ be
a lattice. If
\[
  N \models \text{``$L \cong \Pi(X)$ for some set $X$''},
\]
then
\[
  M \models \text{``$L \cong \Pi(Y)$ for some set $Y$''}.
\]
\end{theorem}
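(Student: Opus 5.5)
We follow the same reconstruction scheme as for powersets and relation algebras. The small cases come first. If $N$ sees $|X|\leqslant 1$, then $L$ is a one-element lattice. The isomorphism with $\Pi(\emptyset)$ or $\Pi(\{*\})$ is then trivially in~$M$, so we assume $|X|\geqslant 2$, whence $L$ has a bottom $0\ne 1$.

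\emph{Reconstruction of the index.} In $\Pi(X)$ the atoms are the equivalence relations $\varepsilon_{ab}$ whose only non-trivial class is a two-element set $\{a,b\}$. We let $A=\At(L)\in M$, defined by the absolute formula ``$a\ne 0$ and nothing lies strictly between $0$ and $a$''; in $N$ this is the set of doubletons $[X]^2$. To recover points, we use the first-order relation on atoms
\[
  \alpha \frown \beta \iff \alpha\ne\beta \ \wedge\ \text{the interval } [0,\alpha\vee\beta] \text{ has exactly three atoms}.
\]
Two doubletons sharing a point join to a $3$-element block, giving three atoms below; disjoint doubletons join to a two-block relation with only two atoms below. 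Thus $\frown$ means ``the doubletons meet in one point''.

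A point $a$ then corresponds to the star $S_a=\{\varepsilon_{ab}: b\ne a\}$. We define it from a pair $\alpha\frown\beta$ exactly as in the proof of Theorem~\ref{thm:symmetric-groups}:
\[
  P(\alpha,\beta)=\{\alpha,\beta\}\cup\{\gamma\in A:\gamma\frown\alpha,\ \gamma\frown\beta,\ \gamma\not\leqslant\alpha\vee\beta\}.
\]
For $\alpha=\{a,b\}$ and $\beta=\{a,c\}$, a doubleton meeting both in one point and not contained in $\{a,b,c\}$ is $\{a,d\}$ with $d\notin\{b,c\}$. Hence $P(\alpha,\beta)=S_a$. When $|X|=2$ there are no such pairs, so that case (and $|X|=3$ if needed) is handled separately as a finite isomorphism lying in~$M$. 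Set
\[
  Y=\{P(\alpha,\beta):\alpha\frown\beta\}\in M;
\]
in $N$ the map $a\mapsto S_a$ is a bijection $X\to Y$.

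\emph{The representation map.} For $\ell\in L$ we define, in $M$,
\[
  \theta(\ell)=\{(p,q)\in Y\times Y: p=q \ \vee\ \text{the unique atom in } p\cap q \text{ lies below } \ell\}.
\]
Distinct stars $S_a,S_b$ meet exactly in $\varepsilon_{ab}$, so the unique atom is well defined. In $N$, $\theta$ is the standard identification of a partition with the set of pairs it relates, because an equivalence relation is determined by the doubletons it contains. Hence $N$ sees $\theta:L\to\Pi^N(Y)$ as a lattice isomorphism.

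By Remark~\ref{rem:old-part}, $\Pi^M(Y)=\Pi^N(Y)\cap M$. Lemma~\ref{lem:descent} then shows that $\theta$ is an isomorphism $L\to\Pi^M(Y)$ in~$M$. In particular, the equivalence relation in $M$ with non-absolute join is still respected, since preservation of the operations is checked on old elements and the joins agree there.

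The main obstacle is the point-reconstruction step: choosing first-order lattice conditions that correctly detect ``sharing a point'' and isolate stars, and handling the small finite cases where these conditions degenerate.
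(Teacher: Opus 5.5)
Your proposal is correct and follows essentially the same route as the paper's proof: atoms, the ``exactly three atoms below the join'' test for sharing a point, stars $P(\alpha,\beta)$ (your exclusion $\gamma\not\leqslant\alpha\vee\beta$ is equivalent to the paper's $r\neq w(s,t)$), recovery of $\ell$ via the unique atom in $p\cap q$, and then the descent lemma. Defining $\theta(\ell)$ directly, without the paper's transitive closure, is fine, since in $N$ that relation is already an equivalence relation. One small correction: join in $\Pi(Y)$ is actually absolute, being a transitive closure, so no ``non-absolute join'' issue needs to be addressed.
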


\begin{proof}
The cases $|X| \leqslant 2$ are finite and hence absolute. Assume
$|X| \geqslant 3$. In $\Pi(X)$, the atoms are precisely the partitions
obtained from the discrete partition by identifying a single unordered
pair $\{a, b\}$. Thus the atom set of~$L$ is, in~$N$, canonically
identified with the edge set of the complete graph on~$X$. Throughout
the proof we write $\{a, b\}$ for the atom whose non-trivial block is
$\{a, b\}$.

Say that two atoms $s, t$ are \emph{triangular} if $s \ne t$ and their
join $s \vee t$ dominates exactly three atoms. In $\Pi(X)$, this is
equivalent to $s$ and $t$ sharing a point: if $s = \{a, b\}$ and $t =
\{a, c\}$, then $s \vee t$ is the partition whose non-trivial block is
$\{a, b, c\}$ and the three atoms below $s \vee t$ are $\{a, b\}$,
$\{a, c\}$, $\{b, c\}$; while if $s = \{a, b\}$ and $t = \{c, d\}$ with
$\{a, b\} \cap \{c, d\} = \emptyset$, then the atoms below $s \vee t$
are only $s$ and~$t$.

For two triangular atoms $s, t$, let $w(s, t)$ denote the unique atom
below $s \vee t$ different from both~$s$ and~$t$ (in the example above,
$w(s, t) = \{b, c\}$). Define, in complete analogy with the
symmetric-group construction,
\[
  P(s, t) = \{s, t\} \cup \bigl\{ r : r \text{ triangular with both }
  s \text{ and } t,\ r \ne w(s, t) \bigr\}.
\]
If $s = \{a, b\}$ and $t = \{a, c\}$, then an atom~$r$ is triangular
with both~$s$ and~$t$ iff $r$ shares a point with each, \emph{i.e.}\
$r \in \{\{a, x\} : x \ne a\} \cup \{\{b, c\}\}$. Excluding $w(s, t) =
\{b, c\}$, we obtain the star
\[
  P(s, t) = \bigl\{ \{a, x\} : x \ne a \bigr\} =: E_a.
\]

Set
\[
  Y = \bigl\{ P(s, t) : s, t \text{ triangular atoms of } L \bigr\}
  \in M.
\]
In~$N$, $Y$ is canonically bijective with~$X$ via $a \mapsto E_a$.

Finally, recover the lattice structure. If $E, E' \in Y$ are distinct
stars, then $E \cap E'$ consists of a unique atom; in~$N$, if $E = E_a$
and $E' = E_b$, this atom is $\{a, b\}$.

For $\ell \in L$, define a binary relation $R_\ell$ on $Y$ by
\[
  E \, R_\ell \, E'
\]
iff either $E = E'$, or $E \ne E'$ and the unique atom in $E \cap E'$
lies below~$\ell$. Let $\theta(\ell)$ be the transitive closure
of~$R_\ell$. Since $R_\ell$ is symmetric and reflexive by construction,
its transitive closure is an equivalence relation on~$Y$.

In~$N$, this is exactly the usual description of a partition by the
graph whose edges are the pairs contained in a common block. Hence $N$
sees
\[
  \theta : L \to \Pi^N(Y)
\]
as an isomorphism. Since $\Pi^M(Y) = \Pi^N(Y) \cap M$,
Lemma~\ref{lem:descent} yields the conclusion in~$M$.
\end{proof}

\subsection{Products of finitely generated centrally indecomposable rings}
\label{subsec:fg-products}

The following $\ZF$-theorem covers many natural finitely generated
ring factors, including finite factors with non-trivial automorphisms.

\begin{theorem}\label{thm:fg-products}
Let $M \subseteq N$ be transitive models of $\ZF$. Let $R \in M$ be a
non-zero unital ring with no central idempotents other than $0$ and~$1$.
Assume that $R$ is generated as a unital ring by a finite tuple
\[
  \bar r = (r_1, \ldots, r_m).
\]
Let $A \in M$ be a unital ring. If
\[
  N \models \text{``$A \cong R^X$ for some set $X$''},
\]
then
\[
  M \models \text{``$A \cong R^Y$ for some set $Y$''}.
\]
\end{theorem}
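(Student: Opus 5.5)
Following the pattern of the earlier reconstructions, we recover the index set from the central idempotents of $A$ and then apply the descent lemma (Lemma~\ref{lem:descent}).

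\textbf{Step 1: the index set.} In $R^X$ the central idempotents are the characteristic functions $1_S$ with $S \subseteq X$. This uses the hypothesis that $R$ has no central idempotents other than $0$ and $1$, because centrality of an element of a product is checked coordinatewise. So in $N$ the Boolean algebra $\mathrm{CI}(A)$ of central idempotents of $A$ is isomorphic to $\cP(X)$. We let $Y \in M$ be the set of atoms of $\mathrm{CI}(A)$, computed in $M$. This is a first-order definable subset of $A$. In $N$ it corresponds to the minimal idempotents $e_x = 1_{\{x\}}$, so $x \mapsto e_x$ is a bijection $X \to Y$. The degenerate case $X = \emptyset$ gives $A = 0$, which is trivially absolute.

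\textbf{Step 2: the coordinate maps.} For each $e \in Y$ the corner $eA$ is a unital ring with unit $e$. In $N$ it is isomorphic to $R$ via the coordinate projection. We need an isomorphism $eA \cong R$ that is chosen uniformly in $e$ without any choice, since the copies of $R$ may be moved by non-trivial automorphisms of $R$.

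Here finite generation is used. Fix $\bar r \in R^m$ and let $\bar a = (a_1, \ldots, a_m) \in A^m$ be any tuple. In $N$, write $\bar a$ as $\bar r$ in every coordinate. We then define $\phi_e : eA \to R$ by sending $p(e\bar a)$ to $p(\bar r)$, for each noncommutative integer polynomial $p$.

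We cannot name such an $\bar a$ from $A$ directly. Instead we use a first-order trick. Let $\bar a$ be an $m$-tuple in $A$ such that, for every $e \in Y$, the map $e\bar a \mapsto \bar r$ extends to a well-defined ring isomorphism $eA \to R$. Equivalently, $e\bar a$ generates $eA$, and a polynomial $p$ vanishes at $e\bar a$ exactly when it vanishes at $\bar r$. Such tuples exist in $N$, for instance the constant tuple $\bar r$ itself. Each corner $eA$ and the data checking this condition lie in $M$, so the condition is absolute. Hence the tuple from $N$ is already a tuple of elements of $A \in M$, and by transitivity $M$ recognises that it satisfies the condition. This makes the choice of $\bar a$ harmless: we fix, in $M$, any single such $\bar a$. (Alternatively, we can avoid even this by noting that $\bar a$ is an element of $A^m \in M$, and the property is first-order in $A$ with $R$ as a parameter.)

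\textbf{Step 3: the isomorphism and descent.} Define $\theta : A \to R^Y$ by $\theta(a)(e) = \phi_e(ea)$. Then $\theta \in M$ and $\theta$ is a unital ring homomorphism.

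In $N$, let $\psi : A \cong R^X$ be an isomorphism. Then $\theta$ equals $\psi$ composed with the coordinatewise automorphisms $\alpha_x$ of $R$, where $\alpha_x$ sends the $x$-th coordinate of $\psi(\bar a)$ to $\bar r$, followed by reindexing along $X \cong Y$. Each $\alpha_x$ is a genuine automorphism of $R$, so $\theta$ is an isomorphism $A \to R^Y$ in $N$. Since $(R^Y)^M = (R^Y)^N \cap M$, Lemma~\ref{lem:descent} gives $A \cong R^Y$ in $M$.

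\textbf{Main obstacle.} The delicate point is Step 2: producing, without choice in $M$, a coherent family of identifications $eA \cong R$, while still obtaining surjectivity in $N$. Surjectivity of the coordinatewise-automorphism twist onto all of $R^X$ in $N$ is clear, because each $\alpha_x$ is bijective and the twist acts coordinatewise. The real content is that one global generating tuple $\bar a$ rigidifies all factors simultaneously. This is exactly why finite generation is assumed: with infinitely many generators the analogous tuple would be an infinite object, and this argument would not produce it.
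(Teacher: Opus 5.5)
Your proposal is correct and follows essentially the paper's argument: the atoms of the Boolean algebra of central idempotents recover the index set, a finite tuple $\bar a \in A^m$ pulled back from the constant generators (which lies in $M$ by transitivity) rigidifies all corners $eA \cong R$ simultaneously, and Lemma~\ref{lem:descent} finishes. The only difference is cosmetic. The paper uses the specific $N$-preimage of the constant tuple, so $\theta$ is literally the coordinate map in $N$. You instead allow any tuple satisfying an absolute rigidity condition and absorb the resulting coordinatewise automorphism twist, which is equally valid.
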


\begin{proof}
The case $X = \emptyset$ is trivial, so assume $X \ne \emptyset$
in~$N$. Let $E$ be the Boolean algebra of central idempotents of~$A$,
and let
\[
  Y = \At(E) \in M.
\]
Since $R$ has no non-trivial central idempotents, $N$ identifies $Y$
with the set of coordinate idempotents of~$R^X$.

Choose, in~$N$, an isomorphism $j : A \to R^X$. For each generator
$r_i$, let $a_i \in A$ be the unique element such that $j(a_i)$ is the
constant function on~$X$ with value~$r_i$. Since $A \in M$, each $a_i$
belongs to~$M$.

For every $e \in Y$ we now define a homomorphism
\[
  \phi_e : R \longrightarrow Ae.
\]
Let $\cT_m$ be the set of ring terms in $m$ variables and integer
coefficients, coded by natural numbers in the usual way. Since $\bar r$
generates~$R$ as a unital ring, for every $r \in R$ there is a term
$t \in \cT_m$ such that
\[
  r = t^R(\bar r).
\]
Choose the least such term in the fixed coding, and define
\[
  \phi_e(r) = t^A(\bar a)\, e,
\]
where $\bar a = (a_1, \ldots, a_m)$.

This is independent of the choice of representative term. Indeed, if
\[
  t^R(\bar r) = u^R(\bar r),
\]
then, in~$N$, after applying the isomorphism $j : A \to R^X$, the
elements $t^A(\bar a)\, e$ and $u^A(\bar a)\, e$ have the same value in
the coordinate corresponding to~$e$. Hence they are equal in~$Ae$.
Since all objects involved lie in~$M$, this equality is absolute.
Therefore $\phi_e$ is a well-defined unital ring homomorphism in~$M$.

In~$N$, the map $\phi_e$ is precisely the coordinate identification
of~$R$ with the corner~$Ae$, and therefore is an isomorphism. Since
$R, A, e$ and the finite tuple $\bar a$ all belong to~$M$, the
assertion that $\phi_e$ is a ring isomorphism is absolute; thus $M$
also sees that every $\phi_e$ is an isomorphism.

Define, in~$M$,
\[
  \theta : A \longrightarrow R^Y, \qquad
  \theta(a)(e) = \phi_e^{-1}(a e) \qquad (a \in A,\ e \in Y).
\]
In~$N$, this is the usual coordinate map $A \cong R^Y$. Hence $N$ sees
$\theta$ as an isomorphism $A \to (R^Y)^N$. By
Lemma~\ref{lem:descent}, using $(R^Y)^M = (R^Y)^N \cap M$, the same map
is an isomorphism $A \to (R^Y)^M$ in~$M$.
\end{proof}

\begin{corollary}\label{cor:fg-product-examples}
In $\ZF$, the property ``$A \cong R^X$ for some set $X$'' is downward
absolute for each of the following fixed rings~$R$:
\[
  \bbZ, \qquad
  \bbZ/p^m\bbZ, \qquad
  \bbF_{p^n}, \qquad
  M_k(\bbF_q), \qquad
  \bbZ[t_1, \ldots, t_n].
\]
More generally, the same holds for every finitely generated unital ring
with no non-trivial central idempotents.
\end{corollary}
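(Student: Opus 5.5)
The corollary should follow from Theorem~\ref{thm:fg-products}. All that remains is to check, for each listed ring~$R$, the two hypotheses: $R$ is a non-zero unital ring generated as a unital ring by a finite tuple, and $R$ has no central idempotents other than $0$ and~$1$. The theorem then gives downward absoluteness for any pair $M \subseteq N$ of transitive $\ZF$-models, with $R \in M$. Each listed ring is a hereditarily finite or canonically coded object, so it lies in every transitive model. The final sentence of the corollary is just Theorem~\ref{thm:fg-products} restated.

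\emph{Finite generation.} I will treat the rings one at a time.
\begin{itemize}[nosep]
  \item $\bbZ$ and $\bbZ/p^m\bbZ$ are generated by the empty tuple, being images of the prime ring.
  \item $\bbF_{p^n}$ is generated by a single primitive element over~$\bbF_p$.
  \item $M_k(\bbF_q)$ is generated by a primitive element $\alpha$ of $\bbF_q$ placed as the scalar matrix $\alpha I$, together with the matrix units $e_{i,i+1}$ and $e_{i+1,i}$. Products of these give every matrix unit $e_{ij}$, and hence all of $M_k(\bbF_q)$. Since $M_k(\bbF_q)$ is finite, it is enough to note that it is a finite ring and take all its elements as generators.
  \item $\bbZ[t_1,\ldots,t_n]$ is generated by $t_1,\ldots,t_n$.
\end{itemize}

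\emph{Central idempotents.}
\begin{itemize}[nosep]
  \item $\bbZ$, $\bbF_{p^n}$ and $\bbZ[t_1,\ldots,t_n]$ are integral domains, so $e^2=e$ forces $e(1-e)=0$ and hence $e\in\{0,1\}$.
  \item For $\bbZ/p^m\bbZ$, a local ring, one of $e$ and $1-e$ is a unit, and an idempotent unit equals~$1$.
  \item For $M_k(\bbF_q)$, the centre consists of the scalar matrices, which form a copy of the field~$\bbF_q$. Its only idempotents are therefore $0$ and $I$.
\end{itemize}

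None of these steps is a real obstacle. The only points needing a moment's care are identifying the centre of the matrix ring and choosing a generating set for it; both are routine, and every verification uses no choice.
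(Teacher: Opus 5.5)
Your proposal is correct and follows the paper's route: the paper states the corollary as an immediate consequence of Theorem~\ref{thm:fg-products}, and your checks of finite generation and of the absence of non-trivial central idempotents are exactly the routine verifications it leaves implicit. The one case your remark on the final sentence glosses over is the zero ring, which the theorem excludes; it is trivial, since $A \cong 0^X$ for some $X$ just says that $A$ is the zero ring.
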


\begin{remark}
Products $R^X$ with $R$ a finite non-rigid factor — a finite field
$\bbF_{p^n}$ with $n \geqslant 2$, or a matrix ring $M_k(\bbF_q)$ — are
\emph{not} merely $\ZFC$-descent examples: they descend already in~$\ZF$.
The key point is that the preimages of the finitely many constant
generators of~$R$ globally trivialise all coordinate corners at once.
Thus the usual automorphism-torsor obstruction does not apply to full
products $R^X$ of finitely generated factors. (The torsor phenomenon
does survive for the more delicate finite-cover and finite-support
examples of Section~\ref{sec:zfc}.)
\end{remark}

\subsection{\texorpdfstring{Atomic commutative $C^*$-algebras}{Atomic commutative C*-algebras}}\label{subsec:cstar}

\begin{theorem}\label{thm:cstar-commutative}
Under the scalar convention, the following properties are downward
absolute in $\ZF$:
\begin{enumerate}[label=\textup{(\roman*)},nosep]
  \item being isomorphic, as a unital commutative $C^*$-algebra, to
    $\ell_\infty(X)$ for some set~$X$;
  \item being isomorphic, as a commutative $C^*$-algebra (not
    necessarily unital), to $c_0(X)$ for some set~$X$.
\end{enumerate}
\end{theorem}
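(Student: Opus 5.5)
Following the pattern of Theorem~\ref{thm:powersets}, I will reconstruct the index set from the projections of the algebra and then apply Lemma~\ref{lem:descent}. Let $A \in M$ be the algebra, and suppose $N$ sees an isomorphism with $\ell_\infty(X)$ (respectively $c_0(X)$). The projections $p = p^* = p^2$ of $A$ form a Boolean algebra under the order $p \leqslant q \iff pq = p$. Call a non-zero projection $e$ \emph{minimal} if $eA = \bbC e$, i.e.\ for every $a \in A$ there is a scalar $\lambda$ with $ae = \lambda e$. By the scalar convention this is an absolute first-order property of $A$ with the fixed field as a named sort. Let $Y \in M$ be the set of minimal projections. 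In $N$, under either isomorphism, $Y$ corresponds exactly to the indicator functions $\delta_x$, $x \in X$, since a projection in $\ell_\infty(X)$ or $c_0(X)$ is the indicator of a subset and the corner is one-dimensional exactly for singletons. The case $X=\emptyset$ is the zero algebra, handled trivially.

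In $M$, define $\theta(a)(e) = \lambda_e(a)$, where $ae = \lambda_e(a)e$. This is well defined in $M$ because $e \ne 0$ makes $\lambda$ unique. In $N$, $\theta$ is the coordinate map $a \mapsto (a(x))_{x}$ after identifying $e$ with $\delta_x$. Hence $N$ sees $\theta$ as a $*$-isomorphism $A \to \ell_\infty^N(Y)$ (respectively $c_0^N(Y)$), and it is isometric, hence norm-preserving. The structure-preservation clauses concern old elements and old scalars, so they are absolute.

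The remaining point, which I expect to be the main one, is the old-part absoluteness $\ell_\infty^M(Y) = \ell_\infty^N(Y) \cap M$ and $c_0^M(Y) = c_0^N(Y) \cap M$, as recorded in Remark~\ref{rem:old-part}. A function $f : Y \to \bbC$ in $M$ is bounded, or tends to zero in the sense that $\{e : |f(e)| \geqslant \varepsilon\}$ is finite for each rational $\varepsilon$, in $M$ iff it is so in $N$. This holds because finiteness of a subset of $Y$ lying in $M$ is absolute, and the sup-norm is computed from the same real numbers. The norm on the target is also absolute: the supremum of an old bounded set of reals is the same in both models under the scalar convention. With this equality in hand, Lemma~\ref{lem:descent} shows that $\theta$ is an isomorphism $A \to \ell_\infty^M(Y)$ (resp.\ $c_0^M(Y)$) in $M$.

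Finally, $M$ must see $A$ as a genuine $C^*$-algebra in the sense of Convention~\ref{conv:analysis}. The algebraic identities, including the $C^*$-identity, are absolute, and $\sigma$-completeness descends by Lemma~\ref{lem:sigma-complete-descends}. Alternatively, this already follows from $A$ being isomorphic in $M$ to $\ell_\infty^M(Y)$ or $c_0^M(Y)$, which are $\sigma$-complete in $\ZF$.
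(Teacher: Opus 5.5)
Your proposal is correct and is essentially the paper's proof: take the minimal projections as $Y$, define $\theta(a)(p)=\lambda_p(a)$ via $ap=\lambda_p(a)p$, and apply Lemma~\ref{lem:descent} using old-part absoluteness for $\ell_\infty(Y)$ and $c_0(Y)$. The only differences are cosmetic. You build the one-dimensionality $pA=\bbC p$ into your definition of minimality, whereas the paper uses order-minimality and then notes that the corner is one-dimensional. You also spell out the old-part absoluteness and $\sigma$-completeness checks that the paper leaves to Remark~\ref{rem:old-part} and Lemma~\ref{lem:sigma-complete-descends}.
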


\begin{proof}
We first handle the unital case. Let $A \in M$ be a unital commutative
$C^*$-algebra such that $N$ sees $A \cong \ell_\infty(X)$. Let $Y
\subseteq A$ be the set of minimal non-zero projections, definable in
the $C^*$-language from the order on self-adjoint elements. For $a \in
A$ and $p \in Y$, the element $ap$ lies in the one-dimensional corner
$pAp = \bbC p$, so there is a unique scalar $\lambda_p(a)$ with
\[
  a p = \lambda_p(a)\, p.
\]
Define
\[
  \theta(a)(p) = \lambda_p(a) \qquad (a \in A,\ p \in Y).
\]
In~$N$, this is the canonical isomorphism $\ell_\infty(X) \to
\ell_\infty(Y)$. Hence Lemma~\ref{lem:descent} gives an isomorphism
$A \cong \ell_\infty^M(Y)$ in~$M$.

For the $c_0$ case the same set $Y$ of minimal projections is used.
The same coordinate map is, in~$N$, the canonical isomorphism
\[
  A \to c_0^N(Y).
\]
By old-part absoluteness for $c_0$-families, Lemma~\ref{lem:descent}
gives $A \cong c_0^M(Y)$ in~$M$.
\end{proof}

\begin{remark}[General commutative $C^*$-algebras]
The atomic cases $\ell_\infty(X)$ and $c_0(X)$ are safe because the
points are recovered as minimal projections, and because the standard
algebras involved are formed using the named scalar field and
$\sigma$-complete normed $*$-algebras as in
Convention~\ref{conv:analysis}. For a general compact Hausdorff
space~$K$, the assertion that a ground-model algebra becomes isomorphic
to $C(K)$ in an outer model involves possible new continuous functions
and a more delicate comparison of spectra; we do not use such a general
statement here.
\end{remark}

\subsection{Endomorphism rings}\label{subsec:endo}

We now pass to reconstructions where the recovered object is not a set
but a module or Hilbert space on which the algebra acts.

\begin{theorem}\label{thm:endomorphism-rings}
Let $M \subseteq N$ be transitive models of $\ZF$, and let $A \in M$ be
a unital ring. Suppose
\[
  N \models \text{``$A \cong \End_D(V)$''}
\]
for some non-zero right vector space $V$ over some division ring~$D$,
and that~$V$ possesses a one-dimensional complemented subspace in~$N$.
Then there exist $E, U \in M$ such that
\[
  M \models \text{``$E$ is a division ring and
  $A \cong \End_E(U)$ as rings''}.
\]
\end{theorem}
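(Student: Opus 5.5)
We reconstruct $D$ and $V$ inside $A$ from a rank-one idempotent, as in the classical identification of $\End_D(V)$ with the endomorphisms of a minimal right ideal. Work in~$N$ with an isomorphism $A \cong \End_D(V)$. Let $L \subseteq V$ be a one-dimensional complemented subspace, and let $p$ be a projection onto~$L$ along some complement. Then $p$ is an idempotent of rank one. Its preimage $e \in A$ lies in~$M$, because $A \in M$ and $M$ is transitive. Moreover $e$ satisfies the first-order condition that $eAe$ is a division ring: in $\End_D(V)$ one has $pAp \cong \End_D(L) \cong D$.

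Since the witness $e$ is chosen in~$N$, I would avoid any appeal to choice in~$M$. Fix this single idempotent $e \in A$. In~$M$, set
\[
  E = eAe, \qquad U = Ae.
\]
Then $U$ is a right $E$-module under multiplication in~$A$, and $A$ acts on~$U$ by left multiplication. This gives a ring homomorphism
\[
  \theta : A \to \End_E(U), \qquad \theta(a)(u) = au .
\]
All of $E$, $U$ and $\theta$ are defined in~$M$ from $A$ and~$e$.

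In~$N$ we transport along the isomorphism:
\begin{itemize}[nosep]
  \item $U$ corresponds to $\{T \in \End_D(V) : T = Tp\}$, which is identified with $\operatorname{Hom}_D(L, V)$;
  \item evaluating at a fixed nonzero $v_0 \in L$ gives $U \cong V$;
  \item $E$ corresponds to $\End_D(L) \cong D$, and this identification is compatible with the module structures (possibly up to the standard twist identifying $D$ with $\End_D(L)$).
\end{itemize}
The classical argument then shows that $\theta$ is an isomorphism $A \to \End_E^N(U)$.

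\emph{Injectivity} holds because $aAe = 0$ forces $a = 0$: $Ae$ contains all rank-one maps $x \mapsto w\,\varphi(x)$ with $\varphi$ vanishing on the complement, so $aw = 0$ for every $w \in V$.

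\emph{Surjectivity} takes more care. Given an $E$-linear $S : U \to U$, the corresponding $D$-linear map on~$V$ is $T(w) = S(w e')(v_0)$, where $w e'$ denotes the rank-one map sending $v_0 \mapsto w$ and killing the complement. One checks that $\theta(T) = S$.

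Both $E$ and $U$ are computed identically in $M$ and~$N$, since they are definable subsets of the old set~$A$. Remark~\ref{rem:old-part} gives $\End^M_E(U) = \End^N_E(U) \cap M$. Lemma~\ref{lem:descent} therefore shows that $\theta$ is an isomorphism $A \cong \End_E(U)$ in~$M$. Finally, $M$ sees that $E$ is a division ring, because this is a first-order property of the old ring $eAe$.

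The main obstacle is the surjectivity computation in~$N$, namely verifying that every $E$-linear endomorphism of $Ae$ comes from left multiplication. This has to be done without choosing bases. That is exactly where the complemented line is used: the complement supplies the projection~$p$, and with it the idempotent~$e$ and the rank-one maps $w e'$ that let us read off $T$ pointwise. The bookkeeping of left versus right actions and possible opposite rings, for example $E$ versus $D^{\mathrm{op}}$, needs care, but it only affects how $E$ is named, not the conclusion.
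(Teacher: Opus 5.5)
Your proposal is correct and follows the paper's proof: you take the rank-one idempotent obtained in $N$ (which lies in $M$), set $E = eAe$ and $U = Ae$, show in $N$ that left multiplication is an isomorphism onto $\End_E(U)$, and transfer this to $M$ via old-part absoluteness and Lemma~\ref{lem:descent}. Your explicit injectivity and surjectivity check in $N$ fills in the ``standard Morita identification'' that the paper only cites; this includes building the rank-one maps $we'$ from the complement, with $D$-linearity of $T$ following from $E$-linearity of $S$.
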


\begin{proof}
In~$N$, the hypothesis furnishes a rank-one idempotent $p \in A$. Since
$p \in A \in M$, $p \in M$.

Set $E = pAp$ and $U = Ap$ in~$M$. In~$N$, the idempotent~$p$ is a
rank-one idempotent in a full endomorphism ring. Hence $pAp$ is a
division ring, $Ap$ is the associated column module, and the standard
Morita identification gives
\[
  A \cong \End_{pAp}(Ap)
\]
by left multiplication. Absoluteness thus yields, in~$M$, that $E$ is a
division ring and $U$ is a right $E$-module.

Define
\[
  \lambda : A \to \End_E(U), \qquad \lambda(a)(u) = au
\]
in~$M$. In~$N$, $\lambda$ realises the Morita identification above, and
is an isomorphism. Using old-part absoluteness for $\End_E(U)$,
Lemma~\ref{lem:descent} transports this conclusion to~$M$.
\end{proof}

\begin{remark}[Working without a named division ring]
Theorem~\ref{thm:endomorphism-rings} reconstructs \emph{a} division
ring~$E$ together with a representation, but does not assert that $E$
agrees with a specified ground-model division ring. If the division
ring is \emph{named}, \emph{e.g.}, $D = \bbQ$ or $D = \bbF_q$, one needs
moreover that $E = pAp$ is $\ZF$-absolutely isomorphic to the fixed~$D$;
this is automatic if~$D$ is finitely generated and centrally
indecomposable in the sense of Theorem~\ref{thm:fg-products}, and in
particular for finite fields. It is also automatic for the prime
field~$\bbQ$, where the isomorphism is the canonical map
$q \mapsto q \cdot 1_E$.
\end{remark}

\begin{corollary}\label{cor:matrix-rings}
In $\ZF$, the following are downward absolute:
\begin{enumerate}[label=\textup{(\roman*)},nosep]
  \item isomorphism with some full matrix ring $M_n(D)$, where
    $n \geqslant 1$ and the division ring~$D$ are allowed to vary;
  \item isomorphism with some $\End_D(V)$ with~$V$ admitting a
    complemented line.
\end{enumerate}
\end{corollary}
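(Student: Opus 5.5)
Both parts of Corollary~\ref{cor:matrix-rings} will follow from Theorem~\ref{thm:endomorphism-rings}. Part~(ii) is simply that theorem restated, since its conclusion $A \cong \End_E(U)$ in~$M$ comes with no extra hypothesis to recover. What still has to be checked is that the reconstructed $U$ is non-zero and, in~$M$, has a complemented line. The idempotent $p \in M$ handles this. In~$N$, $p$ is a rank-one idempotent of $\End_D(V)$, and the element $p \in U = Ap$ is non-zero. Under $\lambda$, $p$ goes to the projection of $U$ onto $pU = pAp\cdot p = E p$, with complement $(1-p)U$. The decomposition $U = pU \oplus (1-p)U$ is an absolute ring-theoretic identity inside $A$, and $pU = Ep$ is a one-dimensional right $E$-space. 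So $M$ sees a complemented line in~$U$, and the class in~(ii) descends.

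For part~(i), let $A \in M$ with $N \models A \cong M_n(D)$. Identify $M_n(D)$ with $\End_D(D^n)$, where $D^n$ is a right $D$-space and the first coordinate line is complemented. Theorem~\ref{thm:endomorphism-rings} then gives $p$, $E = pAp$, $U = Ap$ in~$M$ with $A \cong \End_E(U)$ in~$M$. It remains to show that $U$ is $n$-dimensional over $E$ in~$M$, so that $\End_E(U) \cong M_n(E)$ inside~$M$. In~$N$, an isomorphism $A \cong M_n(D)$ yields matrix units $e_{ij} \in A$, $1 \le i,j \le n$, satisfying $e_{ij}e_{kl} = \delta_{jk}e_{il}$ and $\sum_i e_{ii} = 1$. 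These are finitely many elements of $A$, so they lie in~$M$, and the defining identities are absolute. In~$M$ we therefore have a full system of $n \times n$ matrix units in~$A$. The standard $\ZF$ argument, which needs no choice, then gives $A \cong M_n(e_{11}Ae_{11})$ via $a \mapsto (e_{1i} a e_{j1})_{ij}$. Finally, $e_{11}Ae_{11}$ is a division ring in~$M$, because being a division ring is a first-order property of this old subring and it holds in~$N$.

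Using matrix units, part~(i) bypasses most of the endomorphism-ring theorem and even the descent lemma, since the target $M_n(E)$ is finitary data over~$A$. I therefore expect no real obstacle. The one point needing care is to confirm that the map $a \mapsto (e_{1i}ae_{j1})$ is a ring isomorphism by finitary identities alone, and that finite tuples in $A$ are automatically in $M$ by transitivity.
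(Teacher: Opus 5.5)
Your proof is correct. For part~(i) it takes a different route from the paper's.

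\textbf{The paper's route for (i).} It applies Theorem~\ref{thm:endomorphism-rings} to obtain $A \cong \End_E(U)$ in~$M$, with $E = pAp$ and $U = Ap$. It then pulls a finite $E$-basis of~$U$ of size~$n$ down from~$N$, concluding $U \cong E^n$ and hence $A \cong M_n(E)$.

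\textbf{Your route for (i).} You pull down a full system of matrix units $(e_{ij})$ instead. You then verify inside~$M$, by finitary identities alone, that $a \mapsto (e_{1i} a e_{j1})_{ij}$ is a unital ring isomorphism $A \to M_n(e_{11}Ae_{11})$, with inverse $(x_{ij}) \mapsto \sum_{i,j} e_{i1} x_{ij} e_{1j}$. So neither the endomorphism-ring theorem nor the descent lemma is needed, and your invocation of the theorem in~(i) is actually redundant.

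\textbf{What each buys.} Your route shows that, for each fixed~$n$, membership in the class is a first-order sentence about~$A$: there exist $n^2$ matrix units whose corner $e_{11}Ae_{11}$ is a division ring. Consequently the class in~(i) is absolute in both directions, not merely downward absolute. The paper's route keeps~(i) inside its uniform reconstruct-then-descend framework, and it exhibits the canonical module $Ap$ as free of rank~$n$, which links~(i) directly to~(ii).

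\textbf{Part (ii).} Your argument coincides with the paper's, with one addition. You explicitly check that the reconstructed~$U$ is non-zero and has, in~$M$, the complemented line $pU = pE$ with complement $(1-p)U$. Theorem~\ref{thm:endomorphism-rings} as stated does not assert this, and the paper leaves it implicit, so your check is a genuine completion.

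\textbf{A minor precision.} The finite tuple $(e_{ij})$ lies in~$M$ because $M$ is closed under forming finite subsets of its sets. Transitivity alone only gives that each individual $e_{ij}$ lies in~$M$.
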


\begin{proof}
Part~\textup{(ii)} is Theorem~\ref{thm:endomorphism-rings}.

For~\textup{(i)}, suppose that $N$ sees $A \cong M_n(D)$ for some
$n \geqslant 1$ and some division ring~$D$. Applying
Theorem~\ref{thm:endomorphism-rings}, $M$ obtains a division ring
$E = pAp$ and a right $E$-module $U = Ap$ such that
\[
  A \cong \End_E(U).
\]
In~$N$, the module~$U$ has an $E$-basis of size~$n$. Choose such a
finite basis in~$N$. Its elements belong to the ground-model set~$U$,
so the finite tuple itself belongs to~$M$, and the assertion that it is
an $E$-basis is absolute. Hence $M$ sees $U \cong E^n$, and therefore
\[
  A \cong \End_E(E^n) \cong M_n(E). \qedhere
\]
\end{proof}

\subsection{\texorpdfstring{The operator algebras $\cB(H)$ and $\cK(H)$}{The operator algebras B(H) and K(H)}}\label{subsec:BH}

\begin{theorem}\label{thm:BH}
Let $M \subseteq N$ be transitive models of $\ZF$, and let $A \in M$ be
a unital complex $C^*$-algebra. If $N \models$ ``$A \cong \cB(H)$ for
some Hilbert space~$H$'', then $M \models$ ``$A \cong \cB(K)$ for some
Hilbert space~$K$''.
\end{theorem}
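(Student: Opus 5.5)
The plan is to follow the pattern of Theorem~\ref{thm:endomorphism-rings}: reconstruct a Hilbert space from a rank-one projection of~$A$, and then transport the isomorphism with Lemma~\ref{lem:descent}. First dispose of $H = 0$: then $A$ is the zero algebra, which is absolute, and we take $K = 0$. Otherwise $N$ sees some isomorphism $j : A \to \cB(H)$ with $H \neq 0$. Choose a unit vector in~$H$ (one vector, so no choice is needed); the corresponding rank-one projection pulls back to a projection $p \in A$, and $p \in M$ by transitivity. Minimality of~$p$ among non-zero projections, and the identity $pAp = \bbC p$, are statements about the old algebra $A$ with old scalars, so they hold in $M$ as well.

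Next, in~$M$ set $K = Ap$, with the inner product defined through the corner:
\[
  (ap)^*(bp) = p a^* b p = \langle bp, ap \rangle\, p .
\]
The scalar $\langle bp,ap\rangle$ is uniquely determined because $pAp = \bbC p$, and it belongs to the named scalar field of~$M$. In~$N$, the map $ap \mapsto j(a)\xi$, where $\xi$ is the chosen unit vector, is a unitary $Ap \to H$. Every vector of $H$ has the form $T\xi$ for the rank-one operator $T = \eta \otimes \xi$, and the operator norm of $j(ap)$ is $\|j(a)\xi\|$. Hence $N$ sees $K$ as a $\sigma$-complete inner product space whose norm is the $C^*$-norm restricted from~$A$. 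The inner-product axioms are absolute for old data, and Lemma~\ref{lem:sigma-complete-descends} shows that $M$ also sees $K$ as $\sigma$-complete. So $K$ is a Hilbert space in~$M$.

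Define, in~$M$, $\lambda : A \to \cB^M(K)$ by $\lambda(a)(bp) = abp$. Each $\lambda(a)$ is linear, and $\|abp\| \le \|a\|\,\|bp\|$ shows it is bounded, so it belongs to $\cB^M(K)$. In~$N$, conjugating by the unitary above turns $\lambda$ into $j$, so $N$ sees $\lambda$ as a $*$-isomorphism $A \to \cB^N(K)$. The adjoint is preserved because $\langle abp, cp\rangle p = p b^* a^* c p$. Old-part absoluteness (Remark~\ref{rem:old-part}) gives $\cB^M(K) = \cB^N(K) \cap M$, since an old bounded linear map is seen as such in both models. Lemma~\ref{lem:descent} then shows that $\lambda$ is an isomorphism onto $\cB^M(K)$ in~$M$.

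I expect the main obstacle to be verifying that $\lambda$ is surjective onto $\cB^N(K)$ in~$N$. This step is where fullness of~$\cB(H)$ really enters, and it has to be checked against the choice-free conventions. My first attempt is the direct argument: given $T \in \cB^N(K)$, transport it to~$H$ via the unitary, take its preimage $a = j^{-1}(T)$, and check that $\lambda(a) = T$ on vectors $bp$. A second point to confirm is that $K = Ap$, rather than its closure, is already all of~$H$ under the unitary. The rank-one-operator description above should settle this, since it realises every vector of $H$ as $j(ap)\xi$ for some $a \in A$.
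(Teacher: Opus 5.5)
Your proposal is correct and follows the paper's proof essentially step for step: a rank-one projection $p\in A$, $K=Ap$ with the corner inner product, Lemma~\ref{lem:sigma-complete-descends} for completeness, left multiplication, and Lemma~\ref{lem:descent}. Your direct surjectivity argument via $j^{-1}(UTU^{-1})$ and the rank-one operators $\eta\otimes\xi$ correctly fills in what the paper asserts in one line. One minor slip: your displayed adjoint identity mixes the two inner-product conventions, since under your convention $p b^* a^* c p=\langle cp, abp\rangle p$. The conclusion $\lambda(a)^*=\lambda(a^*)$ is unaffected.
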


\begin{proof}
If $N$ sees $H = 0$, then $A$ is the zero $C^*$-algebra, and the
conclusion is immediate. Assume henceforth that $H \ne 0$. In~$N$,
$A \cong \cB(H)$ contains a rank-one (\emph{i.e.}\ minimal) projection~$p$,
which lies already in~$M$.

Set $K = Ap$ in~$M$. For $x, y \in K$ we have $x^* y \in pAp$.
Minimality of~$p$ yields $pAp = \bbC p$ (this is absolute once~$p$ is
minimal). Hence there is a unique scalar $\langle x, y \rangle$ with
\[
  x^* y = \langle x, y \rangle p.
\]
The formula above defines an inner product on $K = Ap$ in~$M$. In~$N$,
after identifying $A$ with $\cB(H)$ and $p$ with a rank-one projection,
the Hilbert space $K = Ap$ is naturally isometric to~$H$. Thus $N$ sees
$K$ as a $\sigma$-complete Hilbert space. By
Lemma~\ref{lem:sigma-complete-descends}, the same $K$ is a Hilbert
space in~$M$.

Finally, left multiplication
\[
  \theta : A \to \cB^M(K), \qquad \theta(a)(x) = ax
\]
is a $*$-homomorphism in~$M$. In~$N$, $\theta$ is the canonical
faithful representation of $\cB(H)$ on $H \cong Ap$, which is an
isomorphism onto $\cB(K)$. Apply Lemma~\ref{lem:descent}.
\end{proof}

\begin{theorem}\label{thm:KH}
Let $M \subseteq N$ be transitive models of $\ZF$, and let $A \in M$ be
a complex $C^*$-algebra. If $N \models$ ``$A \cong \cK(H)$ for some
Hilbert space~$H$'', then $M \models$ ``$A \cong \cK(K)$ for some
Hilbert space~$K$''.
\end{theorem}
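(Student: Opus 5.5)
Follow the proof of Theorem~\ref{thm:BH}, replacing the unit-dependent steps by ones that use only minimal projections. If $N$ sees $H = 0$, then $A = 0$ and the conclusion is immediate, so assume $H \ne 0$. In~$N$, $\cK(H)$ contains the finite-rank operators, hence a rank-one projection. Fix such a minimal projection $p \in A$; it lies in~$M$ because $A \in M$. Minimality of~$p$ (no non-zero projection strictly below it, equivalently $pAp = \bbC p$) is an absolute statement about old elements, so $M$ also sees $pAp = \bbC p$.

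Put $K = Ap$ in~$M$. This is a left ideal, and $A$ need not be unital, but $p = pp \in Ap$, so $K \ne 0$. For $x, y \in K$ we have $x^*y \in pAp$, so $x^* y = \langle x, y\rangle p$ defines an inner product in~$M$. In~$N$, identify $A$ with $\cK(H)$ and $p$ with the projection onto $\bbC\xi$ for a unit vector~$\xi$. Then $Ap = \{\eta \otimes \xi^* : \eta \in H\}$, because every rank-one operator $\eta\otimes\xi^*$ is compact, and $\eta \mapsto \eta\otimes\xi^*$ is an isometric isomorphism $H \cong K$. So $N$ sees $K$ as $\sigma$-complete, and Lemma~\ref{lem:sigma-complete-descends} makes $K$ a Hilbert space in~$M$.

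Next define, in~$M$, $\theta : A \to \cB^M(K)$ by $\theta(a)(x) = ax$. This is a $*$-homomorphism: the adjoint identity $\langle ax, y\rangle = \langle x, a^*y\rangle$ follows from $(ax)^*y = x^*(a^*y)$. In~$N$, $\theta$ is the identity representation of $\cK(H)$ on $H \cong K$, so $N$ sees $\theta$ as an isomorphism $A \to \cK^N(K)$. In particular $N$ sees each $\theta(a)$ as compact.

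To finish, invoke Lemma~\ref{lem:descent} with $F = \cK$. This needs $\cK^M(K) = \cK^N(K) \cap M$, which Remark~\ref{rem:old-part} provides through the Blackadar--Farah--Karagila characterisation: finite-rank approximants are coded by finitely many old vectors and scalars. The preservation of operations, injectivity and surjectivity onto the old target then go through as in the lemma.

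The main obstacle I expect is the identification step in~$N$, namely that $\theta$ maps onto $\cK(K)$ rather than only into it. I would check this directly: for $\eta,\zeta\in H$, the rank-one operator $\eta\otimes\zeta^*$ acts on $K$ as the rank-one map $x \mapsto (\eta\otimes\xi^*)\,\langle \zeta\otimes\xi^*, x\rangle$, so every finite-rank operator on $K$ is in the image. Since $\theta$ is isometric and $A$ is $\sigma$-complete, the image is closed, so it contains the norm closure of the finite-rank operators, which is $\cK(K)$ by the characterisation in Convention~\ref{conv:analysis}.
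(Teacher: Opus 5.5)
Your proposal is correct and follows the paper's proof essentially step for step: a minimal projection $p \in A$ (automatically in $M$), the space $K = Ap$ with the inner product given by $x^*y = \langle x, y\rangle p$, descent of $\sigma$-completeness via Lemma~\ref{lem:sigma-complete-descends}, left multiplication $\theta$, and Lemma~\ref{lem:descent} via old-part absoluteness $\cK^M(K) = \cK^N(K) \cap M$. Your explicit check that $\theta$ maps onto $\cK(K)$ in $N$, which the paper only asserts, is sound; it also follows at once from $\theta(a) = UaU^{-1}$, where $U\colon H \to K$ is the unitary $\eta \mapsto \eta \otimes \xi^*$.
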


\begin{proof}
Assume $H \ne 0$; the zero case is trivial. In~$N$, choose a minimal
projection $p \in A$. Since $A \in M$, this projection belongs to~$M$.
As in the proof of Theorem~\ref{thm:BH}, define $K = Ap$ and give it
the inner product determined by
\[
  x^* y = \langle x, y \rangle p.
\]
Then $N$ identifies $K$ with~$H$, and in particular sees $K$ as a
$\sigma$-complete Hilbert space. By
Lemma~\ref{lem:sigma-complete-descends}, $M$ sees the same $K$ as a
Hilbert space.

Left multiplication defines
\[
  \theta : A \longrightarrow \cB(K), \qquad \theta(a)(x) = ax.
\]
In~$N$, under the identifications $A \cong \cK(H)$ and $K \cong H$, the
range of~$\theta$ is exactly $\cK(K)^N$. Therefore $N$ sees
\[
  \theta : A \to \cK(K)^N
\]
as a $C^*$-isomorphism.

By the old-part absoluteness for $\cK(K)$ explained in
Remark~\ref{rem:old-part}, we have
\[
  \cK^M(K) = \cK^N(K) \cap M.
\]
Lemma~\ref{lem:descent} therefore yields
\[
  M \models A \cong \cK(K)^M. \qedhere
\]
\end{proof}

\subsection{\texorpdfstring{$\ell_1$ as a Banach lattice}{ell_1 as a Banach lattice}}\label{subsec:l1-lattice}

Recall that a Banach lattice $\ell_1(\Gamma)$ with its coordinatewise
order has a distinguished set of \emph{positive normalised atoms}:
unit vectors $u \geqslant 0$ with $\|u\| = 1$ such that $0 \leqslant v
\leqslant u$ implies $v = \alpha u$ for some $0 \leqslant \alpha
\leqslant 1$. These are precisely the standard basis vectors~$e_\gamma$.

\begin{theorem}\label{thm:l1-lattice}
Let $M \subseteq N$ be transitive models of $\ZF$ and let $E \in M$ be
a real Banach lattice. If $N \models$ ``$E
\cong \ell_1(\Gamma)$ isometrically as Banach lattices'', then the same
holds in~$M$ for some $\Gamma \in M$.
\end{theorem}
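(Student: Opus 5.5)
We follow the reconstruction pattern of Theorems~\ref{thm:powersets} and~\ref{thm:cstar-commutative}. Let $\Gamma' \subseteq E$ be the set of positive normalised atoms, as defined just before the statement. This set is defined in~$M$ by a formula that quantifies only over elements of~$E$, the order, the norm and the (shared) real scalars. By the scalar convention, $\Gamma'$ is computed identically in~$M$ and in~$N$, so $\Gamma' \in M$. In~$N$, an isometric lattice isomorphism $j : E \to \ell_1(\Gamma)$ identifies $\Gamma'$ with $\{e_\gamma : \gamma \in \Gamma\}$. Our candidate index is therefore $\Gamma := \Gamma'$ itself.

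\emph{Coordinate functionals.} For each $u \in \Gamma'$ and $x \in E$ we need, definably in~$M$, the coefficient of~$x$ at~$u$. Put
\[
  \lambda_u(x^+) = \sup\{\alpha \geqslant 0 : \alpha u \leqslant x^+\},
\]
and define $\lambda_u(x^-)$ likewise. Then set $\lambda_u(x) = \lambda_u(x^+) - \lambda_u(x^-)$. The supremum is a supremum of a bounded set of reals. It exists in~$M$, and it is absolute because the reals agree. In~$N$, after applying~$j$, the number $\lambda_u(x)$ is exactly the $\gamma$-th coordinate of $j(x)$, since the coordinatewise order gives $\alpha e_\gamma \leqslant y^+$ if and only if $\alpha \leqslant y^+(\gamma)$. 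Alternatively, one can use the band projection onto $\mathbb{R}u$, which exists in $\ell_1(\Gamma)$ and is defined by the same infimum/supremum formulas.

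\emph{The map and descent.} Define in~$M$ the map $\theta : E \to \mathbb{R}^{\Gamma'}$ by $\theta(x)(u) = \lambda_u(x)$. In~$N$, $\theta$ coincides with $\pi \circ j$, where $\pi$ is the relabelling $\ell_1(\Gamma) \to \ell_1(\Gamma')$ given by $\gamma \mapsto e_\gamma$. Hence $N$ sees $\theta$ as an isometric lattice isomorphism of~$E$ onto $\ell_1^N(\Gamma')$. The remaining ingredients are as follows.
\begin{itemize}[nosep]
  \item By Remark~\ref{rem:old-part}, old absolutely summable families are recognised correctly: $\ell_1^M(\Gamma') = \ell_1^N(\Gamma') \cap M$. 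The norm, being a supremum of finite partial sums, takes the same value in both models.
  \item Linearity, preservation of the lattice operations, and preservation of the norm are statements about old elements and old reals, so they are absolute.
\end{itemize}
Lemma~\ref{lem:descent} then shows that $\theta : E \to \ell_1^M(\Gamma')$ is a surjective isometric lattice isomorphism in~$M$. Finally, $E$ is already assumed to be a Banach lattice in~$M$. In any case, Lemma~\ref{lem:sigma-complete-descends} would give $\sigma$-completeness.

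The main obstacle is checking that each ingredient is genuinely absolute, and in particular that the old-part identity holds for~$\ell_1$ without choice. A family $f \in M$ with $f : \Gamma' \to \mathbb{R}$ is summable if its finite partial sums are bounded. Finite subsets of $\Gamma'$ are the same in~$M$ and~$N$, so this condition is absolute, and no countable choice is needed. Two points deserve care: that the atoms of $\ell_1(\Gamma)$ are exactly the $e_\gamma$, and that the supremum formula recovers the coefficients. Both reduce to a coordinatewise computation inside~$N$.
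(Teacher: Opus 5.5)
Your proposal is correct and matches the paper's proof essentially step for step: you take the positive normalised atoms as the index set, recover coordinates via $\sup\{\alpha \geqslant 0 : \alpha u \leqslant x^{\pm}\}$, and transfer the coordinate map $\theta : E \to \ell_1(\Gamma')$ from $N$ to $M$ using Lemma~\ref{lem:descent} and old-part absoluteness of $\ell_1$. Your added checks — that the suprema and the finite partial sums are absolute, and that no countable choice is needed — just make explicit what the paper leaves implicit.
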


\begin{proof}
Let $Y$ be the set of positive normalised atoms of~$E$:
\[
  y \in Y
\]
iff $y \geqslant 0$, $\|y\| = 1$, and whenever $0 \leqslant z \leqslant
y$ there is a scalar $0 \leqslant \alpha \leqslant 1$ such that
$z = \alpha y$. This definition is made in the language of Banach
lattices and hence gives $Y \in M$.

In~$N$, under an isometric lattice isomorphism $E \cong \ell_1(\Gamma)$,
the set $Y$ is exactly the set of standard unit vectors.

For $x \in E_+$ and $y \in Y$, define
\[
  \alpha_y(x) = \sup \bigl\{ \alpha \in \bbR_+ :
  \alpha y \leqslant x \bigr\}.
\]
In $\ell_1(\Gamma)$ this is precisely the $y$-coordinate of~$x$. For
general $x \in E$, put
\[
  \alpha_y(x) = \alpha_y(x^+) - \alpha_y(x^-).
\]
Now define
\[
  \theta : E \longrightarrow \ell_1(Y), \qquad \theta(x)(y) = \alpha_y(x).
\]
In~$N$, the map $\theta$ is the usual coordinate map from
$\ell_1(\Gamma)$ to $\ell_1(Y)$, and is therefore an isometric lattice
isomorphism onto $\ell_1^N(Y)$. By Lemma~\ref{lem:descent}, and by
old-part absoluteness for $\ell_1(Y)$ under the scalar convention,
$\theta$ is an isometric lattice isomorphism $E \cong \ell_1^M(Y)$
in~$M$.
\end{proof}

\section{\texorpdfstring{$\ZFC$-descent and $\ZF$-obstructions}{ZFC-descent and ZF-obstructions}}\label{sec:zfc}

We now turn to standardness properties that are downward absolute in
$\ZFC$, but not in $\ZF$. The typical situation is that canonical
reconstruction produces a bundle of locally standard pieces — fibres,
signs, phases, automorphism torsors — which $\ZFC$ permits to
trivialise. We begin with the cleanest example, where there are no
completeness or new-sequences caveats.

\subsection{\texorpdfstring{Finite covers: the basic $\ZFC$-only example}{Finite covers: the basic ZFC-only example}}
\label{subsec:finite-covers}

Fix an integer $n \geqslant 2$. Let $\cC_n$ be the class of equivalence
relations isomorphic to the standard equivalence relation on
$Y \times n$, where
\[
  (y, k) \sim (y', k') \quad \Longleftrightarrow \quad y = y'.
\]

\begin{proposition}\label{prop:finite-covers-zfc}
The class $\cC_n$ is downward absolute between transitive models of
$\ZFC$.
\end{proposition}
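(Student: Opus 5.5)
The plan is to show that, in $\ZFC$, membership in $\cC_n$ is an intrinsic first-order property of the equivalence relation, with no reference to a global trivialisation. Concretely, I will prove that in any model of $\ZFC$, an equivalence relation $(E,\sim)$ lies in $\cC_n$ if and only if every $\sim$-class has exactly $n$ elements. Once this is in place, downward absoluteness is immediate. Suppose $N \models (E,\sim) \in \cC_n$ with $(E,\sim) \in M$. Then $N$ sees every class as having size exactly $n$. This is a bounded statement about the ground-model set $E$ and the finite number $n$: for each $e \in E$, there are $n$ pairwise distinct elements of $E$ equivalent to $e$, and no further ones. Since witnesses are elements of $E \subseteq M$, the statement is absolute, so $M$ sees the same thing. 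Applying the characterisation inside $M$ then gives $M \models (E,\sim) \in \cC_n$.

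For the characterisation, one direction is trivial: in $Y \times n$ each class $\{y\}\times n$ has exactly $n$ elements, and isomorphisms preserve class sizes. For the converse, work in a $\ZFC$ model and let $Y = E/{\sim}$ be the quotient set. Each class $c \in Y$ is a set of size $n$, so the set $\mathrm{Bij}(n, c)$ of bijections $n \to c$ is non-empty. By the Axiom of Choice, pick $\sigma_c \in \mathrm{Bij}(n,c)$ for every $c\in Y$. Then the map $Y \times n \to E$ given by $(c,k) \mapsto \sigma_c(k)$ is a bijection carrying the standard equivalence onto $\sim$. This is the global trivialisation of the bundle of $n$-element fibres.

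The only substantive step is the use of choice to trivialise the fibres simultaneously. This is exactly where the torsor obstruction lives: each fibre is an $S_n$-torsor of labellings, and without $\mathrm{AC}$ (indeed, without choice for families of $n$-element sets) there need be no global section. That is presumably what Proposition~\ref{prop:finite-covers-zf-fail} exploits.

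Beyond that, I only need to check that the reformulation ``all classes have size $n$'' genuinely involves no new subsets. Each quantifier ranges over elements of $E$, and $n$ is a fixed standard integer, so $\omega$ is absolute. Hence the property is $\Delta_0$ in the parameters $E, \sim$, and it transfers both upward and downward. Note that Lemma~\ref{lem:descent} is not needed here: $M$ builds its own isomorphism from its own choice function, rather than reusing the one from $N$.
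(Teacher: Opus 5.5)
Your proposal is correct and follows the paper's proof essentially verbatim. The size-$n$ condition on $\sim$-classes is absolute for the ground-model set $E$, and choice in $M$ then labels the classes and assembles an isomorphism $(E,\sim) \cong (E/{\sim}) \times n$ without invoking Lemma~\ref{lem:descent}.
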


\begin{proof}
Let $M \subseteq N$ be transitive models of $\ZFC$, and let $(E, \sim)
\in M$. If $N$ sees $(E, \sim) \cong Y \times n$, then every
$\sim$-class has exactly~$n$ elements. This is absolute to~$M$. Let
$Q = E/{\sim}$, which belongs to~$M$. Since $M \models \ZFC$, choose
in~$M$ a bijection from each equivalence class onto $\{0, \ldots, n - 1\}$.
These choices assemble to an isomorphism
\[
  (E, \sim) \cong Q \times n
\]
inside~$M$.
\end{proof}

\begin{proposition}\label{prop:finite-covers-zf-fail}
Let $M \models \ZF$ be transitive and suppose that $M$ contains a
family $(P_i)_{i \in I}$ of two-element sets with no choice function.
If $N \supseteq M$ is an outer transitive model containing a choice
function for this family, then for every fixed $n \geqslant 2$ there is
an equivalence relation $(E, \sim) \in M$ such that
\[
  M \models (E, \sim) \not\cong Y \times n \text{ for every } Y,
\]
but
\[
  N \models (E, \sim) \cong I \times n.
\]
Consequently, relative to the existence of such transitive models, the
class $\cC_n$ is not downward absolute in $\ZF$.
\end{proposition}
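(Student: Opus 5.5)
We build $E$ so that each equivalence class is a copy of $n$ carrying a hidden "sign" from $P_i$. Concretely, set
\[
  E = \bigsqcup_{i \in I} \bigl( P_i \times \{0,\ldots,n-1\} \bigr)\big/{\approx_i},
\]
where on $P_i \times n$ we identify $(p,k)$ with $(q, \pi(k))$ for $p \ne q$ in $P_i$ and $\pi$ a fixed permutation of $n$ with no fixed points in $\mathbb{Z}/n$. The natural choice is the shift $k \mapsto k+1 \bmod n$. The quotient class $E_i$ thus has exactly $n$ elements. We take $\sim$ to relate two elements iff they lie in the same $E_i$. Everything here is defined in $M$ from the family, so $(E,\sim) \in M$. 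The intended picture is that the $\bbZ/n$-torsor structure on $E_i$ is canonical, but a basepoint (a bijection $E_i \to n$) corresponds to a choice of point of $P_i$.

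For $N$, we use the choice function $c \in N$: the map $(i,k) \mapsto [(c(i),k)]$ is a bijection $I \times n \to E$ carrying equality of first coordinate to $\sim$. So $N$ sees $(E,\sim) \cong I \times n$.

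For $M$, we suppose towards contradiction that $f : E \to Y \times n$ is an isomorphism in $M$. Composing with the second projection gives, uniformly in $i$, a bijection $g_i : E_i \to n$. We must convert this into a choice of an element of $P_i$, definably from $g_i$ and $i$. For $p \in P_i$ we get a bijection $h_{i,p} = g_i \circ [(p,\cdot)] : n \to n$, and the two points give $h_{i,q} = h_{i,p} \circ \pi^{-1}$. The task is to find a definable invariant of permutations of $n$ that distinguishes $h$ from $h\circ\pi^{-1}$. For $n=2$ the two bijections are the identity and the swap, so we choose the $p$ with $h_{i,p} = \mathrm{id}$. For general $n$ this may not separate them, since neither need be the identity.

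This separation step is the main obstacle, and the fix is to change the gluing. I would first try gluing the two copies by a permutation $\pi$ of $n$ chosen so that $\{h, h\pi^{-1}\}$ can always be ordered, for instance via lexicographic order on $n^n$; with two distinct elements, taking the lexicographically smaller one works for any $\pi \ne \mathrm{id}$. Then $i \mapsto$ the unique $p \in P_i$ with $h_{i,p}$ lexicographically least is a choice function for $(P_i)$ in $M$, a contradiction. The precise $\pi$ is therefore unimportant, provided it is nontrivial and the identification is well defined. A well-defined identification needs $\pi$ to be an involution, so I take $\pi = (0\,1)$. The consequence about $\cC_n$ then follows immediately: $(E,\sim)$ lies in $\cC_n^N$ but not in $\cC_n^M$.
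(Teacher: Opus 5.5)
Your final construction with $\pi=(0\,1)$ is correct and is essentially the paper's proof. Each $E_i$ consists of the $n-2$ canonically labelled points $[(p,k)]=[(q,k)]$, $k\geq 2$, together with the two points $[(p,0)],[(q,0)]$ canonically indexed by $P_i$, which is exactly the paper's $Q_i=P_i\times\{0\}\cup\{(i,k):2\leq k<n\}$; and your lexicographic rule reduces to the paper's choice of the point of $P_i\times\{0\}$ with the smaller label. In a write-up, drop the shift detour entirely: for $n\geq 3$ the shift is not an involution, and the identification it generates does not leave $n$ points in $E_i$.
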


\begin{proof}
For each $i \in I$, form an $n$-element set
\[
  Q_i = P_i \times \{0\} \ \cup\ \{(i, k) : 2 \leqslant k < n\}.
\]
The elements $(i, k)$ for $2 \leqslant k < n$ are distinguished
fillers, while the two elements of $P_i \times \{0\}$ remain unordered.

Let
\[
  E = \{(i, q) : i \in I,\ q \in Q_i\},
\]
and put
\[
  (i, q) \sim (j, r) \quad \Longleftrightarrow \quad i = j.
\]
Each equivalence class has exactly~$n$ elements.

If $M$ saw $(E, \sim) \cong Y \times n$, then each class would be
labelled by $\{0, \ldots, n - 1\}$. In the class over~$i$, look at the
two elements coming from $P_i \times \{0\}$ and choose the one whose
label is smaller. This gives a choice function for $(P_i)_{i \in I}$,
contradiction.

On the other hand, any outer transitive model containing a choice
function for $(P_i)_{i \in I}$ can first choose one element from
each~$P_i$ and then label each~$Q_i$ by $\{0, \ldots, n - 1\}$. Hence
such an outer model sees
\[
  (E, \sim) \cong I \times n. \qedhere
\]
\end{proof}

\subsection{\texorpdfstring{Bare $\ell_1(\Gamma)$ as a Banach space}{Bare ell_1(Gamma) as a Banach space}}\label{subsec:l1-bare}

\begin{theorem}\label{thm:l1-zfc}
Let $M \subseteq N$ be transitive models of $\ZFC$ and let $E \in M$ be
a Banach space over the named scalar field~$\bbK$. If $N \models$
``$E \cong \ell_1(\Gamma)$ linearly isometrically'', then the same
holds in~$M$ for some $\Gamma \in M$.
\end{theorem}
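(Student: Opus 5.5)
The plan is to reconstruct, inside~$M$, the $\ell_1$-structure of~$E$ up to signs, and then use choice in~$M$ to pick one element from each sign class. This is the Banach-space analogue of Proposition~\ref{prop:finite-covers-zfc}.

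For the reconstruction, let
\[
  \mathcal{E} = \{ u \in E : \|u\| = 1 \text{ and } u \text{ is an extreme point of the closed unit ball } \Ball(E) \}.
\]
This set is defined in the normed-space language, and all its quantifiers range over elements of~$E$ and over scalars. Under the scalar convention it is therefore absolute, so $\mathcal{E} \in M$ is the same set whether computed in~$M$ or in~$N$. In~$N$, under an isometry $E \cong \ell_1(\Gamma)$, the extreme points of the unit ball of $\ell_1(\Gamma)$ are exactly the vectors $\lambda e_\gamma$ with $|\lambda| = 1$ (here $\gamma \in \Gamma$ and $\lambda \in \bbK$). So $\mathcal{E}$ is a union of ``circles'' $\mathbb{T}u$ (respectively pairs $\{\pm u\}$ in the real case).

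Define on~$\mathcal{E}$ the equivalence relation $u \approx v$ iff $v = \lambda u$ for some unimodular scalar~$\lambda$; this is absolute. Put $Q = \mathcal{E}/{\approx} \in M$. Since $M \models \ZFC$, choose in~$M$ a selector $s : Q \to \mathcal{E}$, and write $u_q = s(q)$.

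The candidate isomorphism is $\Gamma := Q$ with
\[
  \theta : \ell_1^M(Q) \to E, \qquad \theta(c) = \sum_{q} c_q u_q .
\]
The series converges absolutely because $\sum_q |c_q| < \infty$ and only countably many terms are non-zero. It converges in~$E$ by completeness, which in $\ZFC$ is ordinary completeness (cf.\ Lemma~\ref{lem:sigma-complete-descends}).

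Next I would verify in~$N$ that $\theta$ is an isometric isomorphism $\ell_1^N(Q) \to E$. Under the isometry $j : E \to \ell_1(\Gamma)$, each $u_q$ equals $\lambda_q e_{\gamma(q)}$, where $q \mapsto \gamma(q)$ is a bijection $Q \to \Gamma$ and $|\lambda_q| = 1$. So $\theta$, viewed in~$N$, is the coordinate isometry twisted by the unimodular multipliers~$\lambda_q$, and it is onto. It is cleaner to then pass to $\theta^{-1} : E \to \ell_1(Q)$. This map is defined in~$M$ by coordinate functionals: $\theta^{-1}(x)(q)$ is the unique scalar~$c$ such that $\|x - c u_q\| + |c| = \|x\|$. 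This characterisation holds in $\ell_1$ and is absolute. Then apply Lemma~\ref{lem:descent} with $A = E$ and $F(Y) = \ell_1(Y)$, using old-part absoluteness of $\ell_1$-families from Remark~\ref{rem:old-part}. The conclusion is that $M$ sees $\theta^{-1}$ as a linear isometric bijection $E \to \ell_1^M(Q)$.

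The main obstacle is making sure that the coordinate functional is genuinely defined inside~$M$ by an absolute formula, so that the map fed to Lemma~\ref{lem:descent} is an element of~$M$. Its correctness in~$N$ is a computation in $\ell_1(\Gamma)$: for $x = \sum_\gamma x_\gamma e_\gamma$, the value $c = x_\gamma \bar\lambda$ is the unique minimiser making the $\ell_1$-norm additive. If that characterisation proves awkward, the fallback is to define $\theta^{-1}(x)(q)$ as the value at~$x$ of the unique norm-one functional~$f$ satisfying $f(u_q) = 1$ and $f(u_{q'}) = 0$ for $q' \neq q$. Its uniqueness is visible in~$N$, and its existence and values are determined by old data. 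The only use of choice is the selector~$s$, which matches the torsor picture of Section~\ref{sec:zfc}.
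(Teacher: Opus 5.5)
Your route is the paper's: extreme points of $\Ball(E)$, quotient by unimodular scalars, a section chosen in $M$ using $\ZFC$, and a coordinate map $E \to \ell_1(Q)$ defined in $M$ and fed to Lemma~\ref{lem:descent}. You are also right to pass from $\ell_1^M(Q) \to E$ to the inverse direction. In $N$ the forward map is defined only on the old $\ell_1^M(Q)$, so proving it onto would already require knowing that the coordinates of each $x$ lie in $M$.

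The gap is in the key step, the one you flagged yourself. The definition you commit to, $\theta^{-1}(x)(q)$ as ``the unique $c$ with $\|x - c u_q\| + |c| = \|x\|$'', does not determine $c$. Write $u_q = \lambda e_\gamma$ in $N$. The condition then reduces to $|x_\gamma - c\lambda| + |c\lambda| = |x_\gamma|$. This holds for every $c$ with $c\lambda$ on the closed segment from $0$ to $x_\gamma$. In particular $c = 0$ always satisfies it, so a unique such $c$ exists only when $x_\gamma = 0$, and as written the step fails. Your closing mention of a ``unique minimiser'' points to the correct definition, but it is not the formula you actually use.

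\textbf{The fix, as in the paper.} Let $\theta^{-1}(x)(q)$ be the unique minimiser of $\beta \mapsto \|x - \beta u_q\|$. In $N$ this function equals $\|x\| - |x_\gamma| + |x_\gamma - \beta\lambda|$, so it has the single minimiser $\beta = \bar\lambda x_\gamma$. Two equivalent choices also work:
\begin{itemize}[nosep]
\item the $c$ of largest modulus satisfying your additivity condition;
\item the unique $c$ with $\|x - c u_q + d u_q\| = \|x - c u_q\| + |d|$ for all scalars $d$.
\end{itemize}
Each quantifies only over elements of $E$ and scalars. So the map lies in $M$, and the rest of your argument goes through.

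\textbf{Your fallback.} The route via the functional $f$ with $f(u_q) = 1$ and $f(u_{q'}) = 0$ is also repairable, but it is incomplete as stated. That uniqueness is visible in $N$ does not place $f$ in $M$; you must construct $f$ in $M$. One way is to define it on the linear span of the $u_{q'}$ and extend by continuity. This needs density of that span in $E$ as seen in $M$. Density descends from $N$, because each approximating finite combination uses finitely many old vectors and old scalars and is therefore already in $M$.
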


\begin{proof}
If $E = \{0\}$, then $M$ already sees $E \cong \ell_1(\emptyset)$, so
there is nothing to prove. Assume henceforth that $E \ne 0$.

The set $D = \Ext(\Ball(E))$ of extreme points of the closed unit ball
is definable from the norm. In $\ell_1(\Gamma)$, the extreme points
are exactly $\{\lambda e_\gamma : \gamma \in \Gamma,\ |\lambda| = 1\}$:
a vector of norm strictly less than~$1$ can be perturbed in one
coordinate, and a norm-one vector with at least two non-zero coordinates can
be perturbed in opposite directions in two of them; while a one-coordinate
unit vector is plainly extreme. Let $\bbT = \{\lambda \in
\bbK : |\lambda| = 1\}$ act on~$D$ by scalar multiplication; the
quotient $Y = D/\bbT$ is, in~$N$, in bijection with~$\Gamma$. Both $D$
and~$Y$ belong to~$M$.

Using $\ZFC$ in~$M$, choose a section $s : Y \to D$. For $x \in E$ and
$y \in Y$, let $\theta(x)(y)$ be the unique scalar~$\alpha$ minimising
\[
  \beta \longmapsto \|x - \beta s(y)\|.
\]
Equivalently,
\[
  \|x - \alpha s(y)\| \leqslant \|x - \beta s(y)\|
  \qquad (\beta \in \bbK).
\]
In~$N$, after identifying $E$ with $\ell_1(\Gamma)$ and $s(y)$ with a
chosen unit vector on the corresponding coordinate line, this unique
minimiser is exactly the corresponding coordinate of~$x$. Hence $N$ sees
$\theta$ as the usual coordinate map
\[
  E \longrightarrow \ell_1^N(Y).
\]
In particular, $N$ sees $\theta$ as linear, isometric and onto. By
Lemma~\ref{lem:descent}, using old-part absoluteness for $\ell_1(Y)$,
the same map is an isometric linear isomorphism
\[
  E \cong \ell_1^M(Y)
\]
in~$M$.
\end{proof}

\begin{remark}[Other $\ell_p$ spaces]
For $1 < p < \infty$, $p \ne 2$, analogous $\ZFC$-descent statements
should follow from the Banach--Lamperti description of the surjective
linear isometries of $\ell_p(\Gamma)$ \cite{FleJam}, after one first
gives an intrinsic definition of the coordinate one-dimensional bands.
We do not need this extension in the present note. The case $p = 2$ is
basis-dependent and is better treated separately as a Hilbert-space
basis question; see Subsection~\ref{subsec:hilbert-basis}.
\end{remark}

\subsection{\texorpdfstring{Bare $c_{00}(\Gamma)$ as a normed space}{Bare c_00(Gamma) as a normed space}}\label{subsec:c00-bare}

Let $c_{00}(\Gamma)$ denote the real or complex vector space of
finitely supported scalar functions on~$\Gamma$, equipped with its
$\ell_1$ norm. In Theorem~\ref{thm:c00-zfc} the scalar field is the
named field~$\bbK$ from Convention~\ref{conv:scalars}; in
Proposition~\ref{prop:c00-sign-torsor} we specialise to the real case.

\begin{theorem}\label{thm:c00-zfc}
Let $M \subseteq N$ be transitive models of $\ZFC$, and let $E \in M$
be a normed space over the named scalar field~$\bbK$. If
\[
  N \models \text{``$E \cong c_{00}(\Gamma)$ linearly isometrically
  for some set $\Gamma$''},
\]
then
\[
  M \models \text{``$E \cong c_{00}(Y)$ linearly isometrically for some
  set $Y$''}.
\]
\end{theorem}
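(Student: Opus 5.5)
The plan is to follow the template of Theorem~\ref{thm:l1-zfc}: recover the coordinate lines intrinsically, use $\ZFC$ in~$M$ to choose a unit vector on each line, and then invoke Lemma~\ref{lem:descent}. Extreme points of the ball are not available here. For the sup norm with infinite~$\Gamma$ the unit ball of $c_{00}(\Gamma)$ has none. So the coordinate lines will be recognised instead as one-dimensional norm summands, which makes sense for either norm. I will write the summand condition for the sup norm, giving the analogous condition for the $\ell_1$ norm alongside. The argument is identical in both cases.

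\emph{Step 1 (definable summand data).} For $u \in E$ with $\|u\| = 1$, put
\[
  C_u = \bigl\{ x \in E : \|x + t u\| = \max(\|x\|, |t|) \text{ for all } t \in \bbK \bigr\}.
\]
For the $\ell_1$ norm the condition reads $\|x + tu\| = \|x\| + |t|$. This set is defined from the norm with quantifiers only over $E$ and the named field, so $C_u \in M$. It is also computed identically in $M$ and~$N$, by transitivity and the scalar convention. Let $D$ be the set of norm-one $u$ such that $C_u$ is a linear subspace with $E = \bbK u \oplus C_u$. This is again a bounded, absolute condition, so $D \in M$.

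\emph{Step 2 (identification in~$N$).} Inside $N$, with $E = c_{00}(\Gamma)$, I will check two things.
\begin{enumerate}[nosep]
  \item For $u = \lambda e_\gamma$ with $|\lambda| = 1$, the set $C_u$ is exactly $\{x : x(\gamma) = 0\}$. Hence $\lambda e_\gamma \in D$.
  \item Conversely, let $u \in D$ have two non-zero coordinates $\gamma \neq \delta$, with $|u(\gamma)| = 1$ say. The vector $x = e_\delta - u(\delta)\,u(\gamma)^{-1} e_\gamma$ should violate the summand identity, or else $C_u$ fails to be a subspace or fails to complement $\bbK u$. A small direct computation with finitely many coordinates suffices. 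In the $\ell_1$ case it is the usual observation that one-dimensional $L$-summands of $\ell_1$ are the coordinate lines.
\end{enumerate}
Thus $N$ sees $D = \{\lambda e_\gamma : \gamma \in \Gamma,\ |\lambda| = 1\}$. The quotient $Y = D/\bbT$, with $\bbT$ acting by scalar multiplication, lies in $M$ and is in bijection with $\Gamma$ in~$N$. I expect this step to be the main obstacle. Showing that non-coordinate vectors fail the summand condition needs care for the sup norm when $u$ has several coordinates of modulus~$1$. The first thing I would try is to test the identity with $x$ supported on one such coordinate and $t$ chosen to cancel it.

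\emph{Step 3 (coordinates and descent).} Using $\ZFC$ in $M$, choose a section $s : Y \to D$. For $x \in E$ and $y \in Y$, let $\theta(x)(y)$ be the unique $\alpha \in \bbK$ with $x - \alpha s(y) \in C_{s(y)}$; it exists and is unique by the decomposition $E = \bbK s(y) \oplus C_{s(y)}$. This defines $\theta \in M$. In $N$, $\theta(x)(y)$ is the coordinate of $x$ at the corresponding~$\gamma$, multiplied by the conjugate phase $\overline{\lambda}$, where $s(y) = \lambda e_\gamma$. Unimodular rephasing of coordinates preserves both norms, so $N$ sees $\theta$ as a linear isometry of $E$ onto $c_{00}^N(Y)$.

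Finitely supported families on $Y$ are finite ground-model data, so $c_{00}^M(Y) = c_{00}^N(Y) \cap M$ (Remark~\ref{rem:old-part}). Lemma~\ref{lem:descent} then yields $E \cong c_{00}(Y)$ linearly isometrically in~$M$. The only use of choice is the section~$s$, which is consistent with the sign-torsor obstruction announced in Proposition~\ref{prop:c00-sign-torsor}.
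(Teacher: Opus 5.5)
Your proof is correct, but it takes a different route from the paper. One point of reading first: in this paper $c_{00}(\Gamma)$ carries the $\ell_1$ norm (fixed at the start of that subsection). So your premise that extreme points are unavailable does not apply.

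\textbf{The paper's route.} It takes $D = \Ext(\Ball(E))$, which in $c_{00}(\Gamma)$ with the $\ell_1$ norm is $\{\lambda e_\gamma : |\lambda| = 1\}$ by the same perturbation argument as for $\ell_1(\Gamma)$. It then chooses a section $s$ in~$M$ and defines $\theta(x)(y)$ as the unique minimiser of $\beta \mapsto \|x - \beta s(y)\|$. This is essentially the proof of Theorem~\ref{thm:l1-zfc} verbatim.

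\textbf{Your route.} You recognise the coordinate lines as one-dimensional $L$-summands, and you read off coordinates from the decomposition $E = \bbK s(y) \oplus C_{s(y)}$. The quotient by $\bbT$, the single use of choice for the section, and the appeal to Lemma~\ref{lem:descent} with $c_{00}^M(Y) = c_{00}^N(Y) \cap M$ are the same in both proofs.

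\textbf{Write out Step~2 for $\ell_1$.} In the relevant $\ell_1$ case, your Step~2 is a one-line computation, and you should write it out rather than cite ``$L$-summands of $\ell_1$''. That fact is usually stated for the complete space, whereas here the space is $c_{00}$. Since $|x_\epsilon + t u_\epsilon| \leqslant |x_\epsilon| + |t|\,|u_\epsilon|$ termwise and $\|u\| = 1$, the identity $\|x + tu\| = \|x\| + |t|$ holds for all $t$ exactly when every term is an equality for every~$t$. That happens iff $x_\epsilon u_\epsilon = 0$ for all~$\epsilon$. Hence $C_u = \{x : \operatorname{supp} x \cap \operatorname{supp} u = \emptyset\}$, and this set complements $\bbK u$ precisely when $u$ has one-point support.

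\textbf{The sup-norm case.} The difficulty you flag as the main obstacle, non-coordinate vectors under the sup norm, is therefore not needed for the theorem as stated. Note also that your Step~2(2) remains only a sketch for that norm. If you wanted it, an easy route is to extend the contractive $M$-projection to $c_0(\Gamma)$ and use the standard description of $M$-summands there.

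\textbf{Trade-off.} The paper's argument is shorter because it recycles the $\ell_1(\Gamma)$ proof. Yours identifies the coordinate lines by a structural summand property rather than by the geometry of the unit ball. This makes the coordinate functionals projections by construction, and it would adapt to norms whose balls have no extreme points.
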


\begin{proof}
The zero space is trivial, so assume $E \ne 0$. Let
\[
  D = \Ext(\Ball(E)).
\]
In $c_{00}(\Gamma)$ with the $\ell_1$ norm, the extreme points of the
closed unit ball are exactly
\[
  \{\lambda e_\gamma : \gamma \in \Gamma,\ |\lambda| = 1\}.
\]
The verification is the same elementary perturbation argument as for
$\ell_1(\Gamma)$ above.
Let $\bbT = \{\lambda \in \bbK : |\lambda| = 1\}$ and put
$Y = D/\bbT$. Using $\ZFC$ in~$M$, choose a section $s : Y \to D$.

For $x \in E$ and $y \in Y$, let $\theta(x)(y)$ be the unique
scalar~$\alpha$ minimising
\[
  \beta \longmapsto \|x - \beta s(y)\|.
\]
In~$N$, this is exactly the $y$-coordinate of~$x$. Hence $N$ sees
\[
  \theta : E \to c_{00}^N(Y)
\]
as the standard coordinate isometry. Since old finitely supported
families on~$Y$ are exactly the old elements of $c_{00}^N(Y)$, the
descent lemma yields
\[
  M \models E \cong c_{00}^M(Y). \qedhere
\]
\end{proof}

\subsection{\texorpdfstring{$\ZF$-obstruction: a sign torsor}{ZF-obstruction: a sign torsor}}\label{subsec:sign-torsor}

The use of $\ZFC$ in Theorems~\ref{thm:l1-zfc} and~\ref{thm:c00-zfc} is
essential. The cleanest unconditional torsor obstruction is given by
the finite-support analogue $c_{00}(I)$, which has no completeness or
new-sequence caveat. We treat both forms.

The following obstruction is stated for real normed spaces. A complex
analogue can be formulated using circle torsors, but the real
two-point version is the cleanest and is sufficient for the
$\ZF$-failure phenomenon.

\begin{proposition}[Sign torsors for $\ell_1$]\label{prop:l1-zf-failure}
Let $M \models \ZF$ be transitive, and suppose that $M$ contains a
family $(P_i)_{i \in I}$ of two-element sets with no choice function.
For each~$i$, let
\[
  L_i = \Bigl\{ f : P_i \to \bbR :
  \sum_{p \in P_i} f(p) = 0 \Bigr\},
\]
and equip $L_i$ with the norm
\[
  \|f\| = \frac{1}{2} \sum_{p \in P_i} |f(p)|.
\]
Thus $L_i$ is a one-dimensional real normed space, and its unit sphere
is canonically identified with~$P_i$: the point $p \in P_i$
corresponds to the function $u_p$ satisfying
\[
  u_p(p) = 1, \qquad u_p(q) = -1
\]
for the other element $q \in P_i$.

Form
\[
  E = \bigoplus_{i \in I}^{\ell_1} L_i.
\]
Then $M$ does not contain a linear isometry
\[
  E \cong \ell_1(J)
\]
for any set~$J$.

If $N \supseteq M$ is an outer transitive model with the same scalar
field, containing a choice function for $(P_i)_{i \in I}$, and
satisfying the no-new-$\ell_1$-vectors conditions
\[
  \ell_1^N(I) = \ell_1^M(I)
\]
and
\[
  \left( \bigoplus_{i \in I}^{\ell_1} L_i \right)^N
  = \left( \bigoplus_{i \in I}^{\ell_1} L_i \right)^M,
\]
then
\[
  N \models \text{``$E \cong \ell_1(I)$ linearly isometrically''}.
\]
\end{proposition}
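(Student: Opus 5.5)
The proof has two parts: the failure in~$M$, and the trivialisation in~$N$.

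\emph{Failure in~$M$.} Suppose, towards a contradiction, that $M$ contains a linear isometry $\psi : E \to \ell_1(J)$. I would use the extreme-point analysis from the proof of Theorem~\ref{thm:l1-zfc}, which is choice-free: the extreme points of $\Ball(\ell_1(J))$ are the vectors $\pm e_j$.

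First I would compute $\Ext(\Ball(E))$ directly for $E = \bigoplus^{\ell_1}_{i} L_i$. The same perturbation argument shows that the extreme points are exactly the vectors supported on a single summand~$L_i$ with value a unit vector $u_p$, $p \in P_i$. So $\Ext(\Ball(E))$ is canonically $\bigsqcup_i P_i$, and the pairs $\{u_p, u_q\} = \{u_p, -u_p\}$ correspond to the fibres~$P_i$.

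The isometry $\psi$ carries extreme points to extreme points. Hence for each~$i$ it sends $\{u_p, -u_p\}$ onto some pair $\{e_j, -e_j\}$. Now define $c(i)$ to be the unique $p \in P_i$ with $\psi(u_p) = +e_j$. This gives a choice function for $(P_i)_{i\in I}$ in~$M$, which is the desired contradiction. The only subtlety is that $\psi$ is onto, so the extreme points of the two balls correspond bijectively. I would not even need that: injectivity, plus $\psi$ preserving extreme points of balls, follows from surjectivity of an isometry.

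\emph{Success in~$N$.} Let $c \in N$ be a choice function. In~$N$, define
\[
  \Phi : \ell_1(I) \to \bigoplus^{\ell_1}_{i} L_i, \qquad \Phi(a) = (a_i u_{c(i)})_{i\in I}.
\]
Since $\|a_i u_{c(i)}\| = |a_i|$, the map $\Phi$ is a linear isometry. It is onto: every element of $L_i$ is $\lambda u_{c(i)}$ for some real~$\lambda$, and the $\ell_1$-summability condition is the same on both sides.

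The remaining point is that $E$, the ground-model set, is what $N$ computes as the direct sum. This is exactly the no-new-vectors hypothesis $(\bigoplus L_i)^N = (\bigoplus L_i)^M = E$. Together with the shared scalar field, that hypothesis gives $N \models E \cong \ell_1^N(I)$. The hypothesis $\ell_1^N(I) = \ell_1^M(I)$ is used only if one wants the target to be the old space; it is not strictly needed for the statement.

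The main obstacle is the choice-free verification in~$M$ that a linear isometry onto $\ell_1(J)$ maps each pair $\pm u_p$ to a pair $\pm e_j$. I would argue this by preservation of extreme points of the unit ball, which needs only linearity, isometry and surjectivity, together with the elementary identification of $\Ext(\Ball)$ in both spaces carried out without choice.
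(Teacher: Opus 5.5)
Your proposal is correct and is essentially the paper's argument. In $M$, a surjective linear isometry must carry each pair $\pm u_p$ onto a pair $\pm e_j$ of extreme points, so choosing the $p \in P_i$ with $\psi(u_p)$ positive gives a choice function. In $N$, the chosen unit vectors $u_{c(i)}$ give the coordinate isometry, and the no-new-vectors hypothesis identifies the computed direct sum with $E$.

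Your side remark is also right. In $N$, $\Phi$ is already a bijection from $\ell_1^N(I)$ onto $\bigl(\bigoplus^{\ell_1}_{i\in I} L_i\bigr)^N = E$, so the hypothesis $\ell_1^N(I) = \ell_1^M(I)$ is not needed for the conclusion as stated, although the paper invokes both conditions.
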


\begin{proof}
The extreme points of the closed unit ball of~$E$ are exactly the unit
vectors in the summands~$L_i$. Hence an isometry $E \cong \ell_1(J)$
would send these extreme points onto $\{\pm e_j : j \in J\}$ and would
choose, for each~$i$, the unique unit vector in~$L_i$ mapped to a
positive basis vector. Since the unit sphere of~$L_i$ is canonically
$P_i$, this gives a choice function for $(P_i)_{i \in I}$,
contradiction.

Conversely, a choice function for $(P_i)$ chooses a unit vector
$u_i \in L_i$ for every~$i$. The map
\[
  \sum_i \alpha_i u_i \longmapsto (\alpha_i)_{i \in I}
\]
is then the desired isometry, because the no-new-vectors hypotheses
ensure that the domain and target computed in~$N$ are exactly the old
ones.
\end{proof}

\begin{proposition}[Finite-support version]\label{prop:c00-sign-torsor}
With $(P_i)$ and $L_i$ as above, put
\[
  E_{00} = \bigoplus_{i \in I}^{00} L_i,
\]
the algebraic finite-support direct sum with its $\ell_1$ norm. Then
$M$ does not see $E_{00}$ as linearly isometric to $c_{00}(J)$ for any
set~$J$, but every outer transitive model with the same scalar field
and containing a choice function for $(P_i)$ sees
\[
  E_{00} \cong c_{00}(I).
\]
\end{proposition}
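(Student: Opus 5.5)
The proof will mirror Proposition~\ref{prop:l1-zf-failure}, with the finite-support setting removing the completeness caveat. There are two halves: non-isometry in~$M$, and isometry in any outer model with a choice function.

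\emph{Failure in~$M$.} First identify the extreme points of the closed unit ball of~$E_{00}$. An element $x = \sum_{i \in F} x_i$ with $F$ finite and $x_i \in L_i$ has norm $\sum_i \|x_i\|$.
\begin{itemize}[nosep]
  \item If $\|x\| < 1$, then $x$ can be perturbed inside a single summand, so it is not extreme.
  \item If $\|x\| = 1$ and two summands are non-zero, transferring mass between them in opposite directions stays in the unit ball, so $x$ is not extreme.
  \item A unit vector lying in a single~$L_i$ is extreme, since the norm is additive across summands and $L_i$ is one-dimensional.
\end{itemize}
Hence $\Ext(\Ball(E_{00})) = \bigcup_i \{u_p : p \in P_i\}$, and this is absolute. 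Now suppose $M$ had a linear isometry $\phi : E_{00} \to c_{00}(J)$. It maps extreme points onto $\{\pm e_j : j \in J\}$. For each~$i$, exactly one of the two unit vectors $u_p$, $p \in P_i$, is sent to some $+e_j$, because the pair $\{u_p, -u_p\}$ goes to some pair $\{e_j, -e_j\}$. Selecting that~$p$ gives a choice function for $(P_i)_{i \in I}$ in~$M$, which is a contradiction.

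\emph{Success in~$N$.} Let $c \in N$ be a choice function and set $u_i = u_{c(i)} \in L_i$. Define in~$N$
\[
  \psi : c_{00}^N(I) \to E_{00}, \qquad (\alpha_i) \longmapsto \sum_i \alpha_i u_i .
\]
Linearity is clear, and $\psi$ is isometric because $\|u_i\| = 1$ and the norm is additive across summands. The main point, which here is routine, is that neither side picks up new elements. The space $E_{00}$ is defined in~$M$ as a set of finitely supported families. In~$N$, the corresponding finite-support sum over the same $I$, $L_i$ and scalars consists only of finite tuples of old vectors with old scalars. Such a tuple is a finite subset of an element of~$M$, hence lies in~$M$, so $(E_{00})^N = (E_{00})^M$.

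Surjectivity follows: each $x \in E_{00}$ has finite support, and on each $i$ in the support its component is $\alpha_i u_i$ for a unique real~$\alpha_i$. The resulting $(\alpha_i)$ is finitely supported, belongs to $c_{00}^N(I)$, and maps to~$x$. Thus $N \models E_{00} \cong c_{00}(I)$.

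The only step needing care is the extreme-point computation, which controls where the signs go in the first half. It is the same perturbation argument as in Theorem~\ref{thm:c00-zfc}, and no completeness is involved.
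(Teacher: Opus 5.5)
Your proposal is correct and follows essentially the paper's own proof. The obstruction in~$M$ is the same: a linear isometry $E_{00}\cong c_{00}(J)$ would pick, in each summand~$L_i$, the unit vector sent to a positive basis vector, giving a choice function. The positive direction is also the same: a choice function fixes unit vectors $u_i\in L_i$, and finite-support families built from old data are already in~$M$.

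The only differences are cosmetic. You write the isometry in the direction $c_{00}(I)\to E_{00}$, and you carry out explicitly the extreme-point computation that the paper delegates to Proposition~\ref{prop:l1-zf-failure}.
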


\begin{proof}
The obstruction in~$M$ is the same as in
Proposition~\ref{prop:l1-zf-failure}: an isometry with $c_{00}(J)$
would choose a positive unit vector in each summand~$L_i$, hence a
choice function for $(P_i)$.

Conversely, a choice function for $(P_i)$ chooses unit vectors
$u_i \in L_i$, and the finite-support map
\[
  \sum_{i \in F} \alpha_i u_i \longmapsto
  \sum_{i \in F} \alpha_i e_i
\]
is a linear isometry $E_{00} \cong c_{00}(I)$. There is no issue about
new supports: finite subsets of ground-model sets are already
ground-model sets. Under the scalar convention, there are no new scalar
coefficients either.
\end{proof}

\subsection{\texorpdfstring{Hilbert spaces as $\ell_2(\Gamma)$}{Hilbert spaces as ell_2(Gamma)}}\label{subsec:hilbert-basis}

The operator-algebra statement $A \cong \cB(H)$ is basis-free, but the
standard presentation $H \cong \ell_2(\Gamma)$ is not.

\begin{proposition}\label{prop:hilbert-zfc}
Between transitive models of $\ZFC$, the property
\[
  H \cong \ell_2(\Gamma) \text{ for some set } \Gamma
\]
is downward absolute for Hilbert spaces $H$ over the named scalar field.
\end{proposition}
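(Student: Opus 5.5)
The plan is to bypass any reconstruction of a basis from $N$ and use choice in~$M$ directly. In $\ZFC$ every Hilbert space is isometrically isomorphic to $\ell_2(\Gamma)$ for some~$\Gamma$. So the only content is that $M$ sees $H$ as a Hilbert space. Once that holds, Zorn's lemma in~$M$ produces an orthonormal basis, and the basis gives the isomorphism.

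\emph{Step 1: $H$ is a Hilbert space in~$M$.} Assume $H \in M$ is an inner product space over the named field~$\bbK$, and $N$ sees $H \cong \ell_2(\Gamma)$. Then $N$ sees $H$ as $\sigma$-complete. By Lemma~\ref{lem:sigma-complete-descends}, $M$ also sees $H$ as $\sigma$-complete, so $H$ is a Hilbert space in~$M$. If the hypothesis is read as ``$H$ is already a Hilbert space in~$M$'', this step is automatic. Under $\ZFC$ in~$M$, $\sigma$-completeness coincides with Cauchy completeness, so the classical theory is available there.

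\emph{Step 2: choose a basis in~$M$.} By Zorn's lemma in~$M$, pick a maximal orthonormal family $(e_\gamma)_{\gamma \in \Gamma} \in M$. Its closed span is then dense, by the usual orthogonal-complement argument, which uses completeness in~$M$.

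\emph{Step 3: the coordinate map.} Define in~$M$
\[
  \theta(x) = (\langle x, e_\gamma\rangle)_{\gamma\in\Gamma}.
\]
Bessel and Parseval in~$M$ show that $\theta$ is an isometry into $\ell_2^M(\Gamma)$. Riesz--Fischer in~$M$ shows it is onto: the partial sums of $\sum_\gamma a_\gamma e_\gamma$ form a Cauchy net, and it converges in the complete space~$H$. Hence $M \models H \cong \ell_2(\Gamma)$.

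\emph{Main obstacle.} The delicate point is Step~1, and the route above handles it with Lemma~\ref{lem:sigma-complete-descends}. The alternative would be to transport the $N$-basis down via Lemma~\ref{lem:descent}, and that does not work. The $N$-basis is a subset of~$H$ that need not lie in~$M$, and there is no canonical reconstruction of~$\Gamma$ from~$H$: any unitary moves the basis. This is exactly why the statement is only a $\ZFC$ result. In~$\ZF$, $M$ may see $H$ as complete and yet have no orthonormal basis, which is the ``basis-existence issue'' mentioned in the table. I would add a remark to that effect and leave the $\ZF$ failure as the familiar phenomenon, not prove it here.
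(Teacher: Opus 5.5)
Your proposal is correct and is essentially the paper's argument. The paper notes that $M \models \ZFC$ yields an orthonormal basis of $H$ by Zorn's lemma, so $M$ already sees $H \cong \ell_2(\Gamma)$ and the hypothesis about $N$ plays no real role. Your Step~1 uses Lemma~\ref{lem:sigma-complete-descends} to recover completeness in~$M$ when $H$ is only assumed to be an inner product space there. This is a harmless extra that the paper sidesteps by taking $H$ to be a Hilbert space in~$M$ from the outset.
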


\begin{proof}
If $M \models \ZFC$ and $H \in M$ is a Hilbert space, then $M$ contains
an orthonormal basis of~$H$ by the usual Zorn-lemma argument. Hence
$M$ already sees $H \cong \ell_2(\Gamma)$ for some~$\Gamma$.
\end{proof}

\begin{remark}
In $\ZF$, the assertion that every Hilbert space has an orthonormal
basis is a genuine choice principle. Thus a Hilbert space without an
orthonormal basis in a ground model may become isomorphic to
$\ell_2(\Gamma)$ in an outer model which adds such a basis and does not
create additional square-summable coordinate families. This is the
Hilbert-space analogue of the $\ell_1$ sign-torsor caveat.
\end{remark}

\section{\texorpdfstring{Consequences for $\Pi^1_1$-definability}{Consequences for Pi^1_1-definability}}\label{sec:pi11}

The preceding results have two different kinds of consequences. For
the canonically reconstructible classes, they show that the usual
forcing strategy for disproving $\Pi^1_1$-definability cannot work. In
the motivating case of full symmetric groups,
Corollary~\ref{cor:sym-pi11} gives the stronger positive conclusion
that fullness itself has a uniform $\Pi^1_1$ definition over transitive
$\ZF$-models. For the torsor examples, by contrast, one obtains actual
failures of $\Pi^1_1$-definability over~$\ZF$.

\begin{corollary}\label{cor:no-forcing-counterexample-zf}
Let $\cC$ be any of the $\ZF$-descent classes proved above; for example
\[
  \Sym(X), \quad X^X, \quad \cP(X), \quad \Rel(X), \quad \mathscr{O}_X,
  \quad \Pi(X), \quad R^X
\]
with $R$ finitely generated and centrally indecomposable,
\[
  \ell_\infty(X), \quad c_0(X), \quad \cB(H), \quad \cK(H),
\]
or $\ell_1(X)$ as a real Banach lattice. Then there are no transitive models
$M \subseteq N$ of $\ZF$ and no $A \in M$ such that
\[
  M \models A \notin \cC \qquad \text{but} \qquad N \models A \in \cC.
\]
In particular, no forcing extension of a $\ZFC$ ground model can turn a
non-standard object of one of these kinds into a standard one.
\end{corollary}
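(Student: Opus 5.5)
The plan is to derive the corollary directly from the individual $\ZF$-descent theorems of Section~\ref{sec:zf}, by contraposition. Suppose, towards a contradiction, that transitive models $M \subseteq N$ of $\ZF$ and some $A \in M$ satisfy $M \models A \notin \cC$ and $N \models A \in \cC$. Then $N$ sees $A \cong F(X)$ for some set $X$ (or for some Hilbert space $H$, in the operator-algebra cases). The relevant descent result produces an index $Y \in M$, or a Hilbert space $K \in M$, with $M \models A \cong F(Y)$, that is, $M \models A \in \cC$, which is the contradiction.

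I will match each class listed in the statement with the theorem that handles it:
\begin{itemize}[nosep]
  \item $\Sym(X)$: Theorem~\ref{thm:symmetric-groups};
  \item $X^X$: Theorem~\ref{thm:transformation-monoid};
  \item $\cP(X)$: Theorem~\ref{thm:powersets};
  \item $\Rel(X)$: Theorem~\ref{thm:relation-algebras};
  \item $\mathscr{O}_X$: Theorem~\ref{thm:full-clones};
  \item $\Pi(X)$: Theorem~\ref{thm:partition-lattices};
  \item $R^X$: Theorem~\ref{thm:fg-products};
  \item $\ell_\infty(X)$ and $c_0(X)$: Theorem~\ref{thm:cstar-commutative};
  \item $\cB(H)$ and $\cK(H)$: Theorems~\ref{thm:BH} and~\ref{thm:KH};
  \item $\ell_1(X)$ as a real Banach lattice: Theorem~\ref{thm:l1-lattice}.
\end{itemize}
For $R^X$, the ring $R$ is the fixed ground-model ring, so it lies in $M$ as Theorem~\ref{thm:fg-products} requires. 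The analytic classes are covered under the scalar convention (Convention~\ref{conv:scalars}), and that convention has to be stated explicitly as a standing hypothesis for these cases.

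The forcing clause then follows as a special case. If $M \models \ZFC$ is transitive and $M[G]$ is a forcing extension, then $M \subseteq M[G]$ are transitive models of $\ZF$. For the analytic classes, I will also require that the scalar fields agree, as the convention stipulates.

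The only point needing care is this last bookkeeping step: the analytic cases depend on the scalar convention, and a forcing that adds reals violates it. So I will state that the conclusion for $\ell_\infty$, $c_0$, $\cB(H)$, $\cK(H)$ and $\ell_1$ is read relative to that convention, treating the scalar field as a fixed named sort. With that hypothesis made explicit, the rest is routine.
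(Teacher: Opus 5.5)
Your argument is correct, and it is how the corollary follows in the paper. The paper gives no separate proof: the statement simply repackages the individual $\ZF$-descent theorems of Section~\ref{sec:zf}, with Convention~\ref{conv:scalars} as a standing hypothesis for the analytic classes, as you say. One step you leave implicit is that those theorems assume $M$ already sees $A$ as a structure of the right type (group, ring, $C^*$-algebra, Banach lattice, \ldots). This is absolute for the algebraic axioms, and for $\sigma$-completeness in the analytic cases it follows from Lemma~\ref{lem:sigma-complete-descends}.
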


\begin{corollary}\label{cor:no-forcing-counterexample-zfc}
For the $\ZFC$-descent classes — bare Banach-space isometry with some
$\ell_1(\Gamma)$, finite covers $Y \times n$, normed-space isometry
with some $c_{00}(\Gamma)$, and Hilbert-space isomorphism with
$\ell_2(\Gamma)$ — there are no transitive models $M \subseteq N$
of~$\ZFC$ and no $A \in M$ such that
\[
  M \models A \notin \cC \quad \text{but} \quad N \models A \in \cC.
\]
Thus the original forcing strategy cannot refute a putative
$\Pi^1_1$ definition over transitive $\ZFC$-models.
\end{corollary}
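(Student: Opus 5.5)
This corollary is assembled directly from the $\ZFC$-descent results of Section~\ref{sec:zfc}. For each listed class $\cC$ the corresponding result already states downward absoluteness between transitive models of $\ZFC$: Theorem~\ref{thm:l1-zfc} covers $\ell_1(\Gamma)$, Proposition~\ref{prop:finite-covers-zfc} covers finite covers $Y\times n$, Theorem~\ref{thm:c00-zfc} covers $c_{00}(\Gamma)$, and Proposition~\ref{prop:hilbert-zfc} covers $\ell_2(\Gamma)$.

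The argument is a contraposition. Suppose $M\subseteq N$ are transitive $\ZFC$-models, $A\in M$, and $N\models A\in\cC$. The relevant result then yields $M\models A\in\cC$, which contradicts $M\models A\notin\cC$. For the analytic classes the standing scalar convention (Convention~\ref{conv:scalars}) is in force, so the theorems apply verbatim. One small point needs checking: for $\ell_2(\Gamma)$, Proposition~\ref{prop:hilbert-zfc} needs $A$ to be a Hilbert space in $M$. Lemma~\ref{lem:sigma-complete-descends} supplies this, since $\sigma$-completeness descends and the inner-product identities are absolute. In fact $M\models\ZFC$ already gives $H\cong\ell_2(\Gamma)$ there, so the hypothesis about $N$ is only needed to know that $A$ is of the right type.

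For the final sentence, suppose $\cC$ had a $\Pi^1_1$ definition uniform over transitive $\ZFC$-models. The forcing strategy would then need some $M\subseteq M[G]$ and $A$ with $A\notin\cC^M$ but $A\in\cC^{M[G]}$. Forcing extensions of $\ZFC$ models are transitive $\ZFC$ models containing the ground model, so the first part of the corollary rules this out. Hence that strategy can never produce the required witness, which is exactly the sense of Lemma~\ref{lem:pi11-test}.

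There is essentially no obstacle. The only care needed is to note that the hypotheses of each cited result (scalar convention, nonzero cases, completeness) hold automatically in the stated setting.
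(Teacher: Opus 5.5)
Your proposal is correct and follows the paper's route. The paper gives no separate proof and treats the corollary as the contrapositive of Theorem~\ref{thm:l1-zfc}, Proposition~\ref{prop:finite-covers-zfc}, Theorem~\ref{thm:c00-zfc} and Proposition~\ref{prop:hilbert-zfc}, exactly as you do. Your remark that Lemma~\ref{lem:sigma-complete-descends} ensures $A$ is already a Banach (resp.\ Hilbert) space in $M$ makes explicit a point the paper leaves implicit.
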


The next two corollaries are to be read relative to the standard
existence, obtained for example by symmetric-model methods, of
transitive $\ZF$-models containing a family of pairs with no choice
function and an outer transitive model adding such a choice function
and, for the normed-space statement, preserving the named scalar field;
cf.~\cite{Jech-AC,Howard-Rubin}.

\begin{corollary}\label{cor:zf-no-pi11-finite-covers}
Assume that there are transitive models $M \subseteq N$ of $\ZF$ such
that $M$ contains a family of two-element sets with no choice function
and $N$ contains a choice function for that family. Then, for each
$n \geqslant 2$, the class of equivalence relations isomorphic to
$Y \times n$ has no $\Pi^1_1$ definition uniform over transitive
$\ZF$-models with absolute matrix.
\end{corollary}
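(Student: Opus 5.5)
The plan is to argue by contradiction, combining Proposition~\ref{prop:finite-covers-zf-fail} with Lemma~\ref{lem:pi11-test}. Fix $n \geqslant 2$. Suppose $\cC_n$ had a $\Pi^1_1$ definition $\forall Z\,\varphi(Z, A)$, uniform over transitive $\ZF$-models. Suppose further that its matrix $\varphi$ is absolute for transitive submodels. Uniformity means that, in every transitive $\ZF$-model $P$ and for every $A \in P$, we have $P \models A \in \cC_n$ if and only if $P \models \forall Z\,\varphi(Z, A)$. Lemma~\ref{lem:pi11-test} then says that $\cC_n$ is downward absolute between any pair of transitive $\ZF$-models $M \subseteq N$.

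Next, take the models supplied by the hypothesis. These are transitive $M \subseteq N$ with a family $(P_i)_{i \in I} \in M$ of two-element sets that has no choice function in $M$, while $N$ contains one. Apply Proposition~\ref{prop:finite-covers-zf-fail} to this family and the fixed $n$. It produces $(E, \sim) \in M$ with $M \models (E, \sim) \notin \cC_n$ and $N \models (E, \sim) \cong I \times n$, so $N \models (E,\sim) \in \cC_n$. This contradicts the downward absoluteness established in the previous step, so no such definition exists.

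The only point needing care is checking that the hypotheses of Proposition~\ref{prop:finite-covers-zf-fail} are exactly those assumed here. It uses no completeness or scalar convention and requires only that $N$ be an outer transitive model containing a choice function. We must also check that the notion ``uniform definition with absolute matrix'' matches the hypothesis of Lemma~\ref{lem:pi11-test}. Both should be immediate. I do not expect a genuine obstacle; the substance was already done in the cited proposition.
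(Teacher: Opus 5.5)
Your proposal is correct and is essentially the paper's own argument: a uniform $\Pi^1_1$ definition with absolute matrix would make $\cC_n$ downward absolute by Lemma~\ref{lem:pi11-test}, and the equivalence relation $(E,\sim)$ of Proposition~\ref{prop:finite-covers-zf-fail}, built from the given pair $M \subseteq N$, refutes that. Your extra checks are the points the paper's two-line proof leaves implicit: that the proposition needs no scalar or completeness convention, and that uniformity is exactly the hypothesis of the lemma.
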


\begin{proof}
By Proposition~\ref{prop:finite-covers-zf-fail}, this class is not
downward absolute between such transitive $\ZF$-models. But any uniform
$\Pi^1_1$ definition with absolute matrix would be downward absolute by
Lemma~\ref{lem:pi11-test}.
\end{proof}

\begin{corollary}\label{cor:zf-no-pi11-c00}
Under the same transitive-model hypothesis, the property, for real
normed spaces, ``is linearly isometric to $c_{00}(J)$ for some set
$J$'' has no $\Pi^1_1$ definition uniform over transitive $\ZF$-models
with absolute matrix.
\end{corollary}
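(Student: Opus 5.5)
The argument is a contrapositive, parallel to Corollary~\ref{cor:zf-no-pi11-finite-covers}, using Proposition~\ref{prop:c00-sign-torsor} in place of Proposition~\ref{prop:finite-covers-zf-fail}. Suppose, towards a contradiction, that there is a uniform $\Pi^1_1$ formula $\forall Z\,\varphi(Z,E)$ with matrix $\varphi$ absolute for transitive submodels, defining ``$E$ is linearly isometric to $c_{00}(J)$ for some set $J$'' over transitive $\ZF$-models. Lemma~\ref{lem:pi11-test} then makes this class downward absolute between any transitive $M \subseteq N$ of $\ZF$ (with the named scalar field shared).

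Now I use the hypothesis. Take transitive $M \subseteq N$ of $\ZF$ with a family $(P_i)_{i\in I} \in M$ of two-element sets having no choice function in $M$, while $N$ contains one. As in the paragraph before Corollary~\ref{cor:zf-no-pi11-finite-covers}, the hypothesis for the normed-space statement includes that $N$ has the same real field as $M$, so Convention~\ref{conv:scalars} is met. In $M$, form the one-dimensional real spaces $L_i$ and their finite-support $\ell_1$-sum $E_{00}$, as in Propositions~\ref{prop:l1-zf-failure} and~\ref{prop:c00-sign-torsor}. By Proposition~\ref{prop:c00-sign-torsor}, $M$ sees no linear isometry $E_{00}\cong c_{00}(J)$ for any $J$, whereas $N$ sees $E_{00}\cong c_{00}(I)$. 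This contradicts downward absoluteness, so no such $\Pi^1_1$ definition exists.

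The only point requiring care is that $E_{00}$, formed in $M$, is still the same normed space as seen from $N$. This holds because finite subsets and finite-support families over ground-model index sets are absolute, and the scalars agree, so no new vectors appear. This is exactly the ``no new supports'' remark in the proof of Proposition~\ref{prop:c00-sign-torsor}. For this reason $c_{00}$ is used rather than $\ell_1$: it avoids the extra no-new-$\ell_1$-vectors hypotheses. Beyond this check the argument is routine.
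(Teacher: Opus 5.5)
Your proposal is correct and matches the paper's proof: you combine the downward absoluteness given by Lemma~\ref{lem:pi11-test} with the ground-model failure and outer-model success from Proposition~\ref{prop:c00-sign-torsor}. Your remarks on the shared real field and on the absoluteness of finite-support families spell out what the paper absorbs into that proposition and the hypothesis paragraph preceding Corollary~\ref{cor:zf-no-pi11-finite-covers}.
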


\begin{proof}
Use Proposition~\ref{prop:c00-sign-torsor} and
Lemma~\ref{lem:pi11-test}.
\end{proof}

\begin{remark}
For the Banach-space property ``is linearly isometric to some
$\ell_1(\Gamma)$'', the same $\ZF$-level non-definability conclusion is
available whenever the sign-torsor construction is carried out inside
outer models preserving the relevant $\ell_1$-summable families.
Without that preservation hypothesis, the finite-cover and
finite-support versions are the clean unconditional examples.
\end{remark}

\section{Concluding remarks and questions}\label{sec:questions}

The examples collected above suggest a general pattern and several
natural open problems.

\begin{question}
Is there a syntactic criterion on a functor $F : \mathbf{Set} \to
\mathbf{Struct}$ equivalent to downward absoluteness, in $\ZF$, of the
class of structures of the form $F(X)$? A candidate condition is that
the isomorphism class of $F(X)$ has a \emph{single-sorted definable
skeleton} (in the sense of Rubin~\cite{Rubin}) reconstructing~$X$.
\end{question}

\begin{question}
Which classes of Banach algebras satisfy $\ZFC$-downward absoluteness
of standardness? For instance, is the property ``is isometrically
isomorphic to the group algebra $L^1(G)$ for some locally compact
abelian group~$G$'' downward absolute under forcing preserving the
reals?
\end{question}

\begin{remark}
The automorphism-tower phenomenon of Fuchs and
Hamkins~\cite{FuchsHamkins} shows that certain isomorphism relations
between algebraic objects are highly forcing-controllable.
Theorem~\ref{thm:symmetric-groups} shows that the standardness relation
`is a full symmetric group' is rigid in the opposite direction:
forcing cannot turn a ground-model non-full group into a full symmetric
group.
\end{remark}

\section*{Acknowledgements}
The author gratefully acknowledges support received from
NCN Sonata-Bis~13 (2023/50/E/\-ST1/00067).

\end{document}